\title[Biseparable vs. Frobenius]{Biseparable extensions are not necessarily Frobenius}
\thanks{Research supported by the grant MTM2016-78364-P from Agencia Estatal de Investigaci{\'o}n and FEDER. The fourth author was supported by The National Council of Science and Technology (CONACYT) of Mexico with a scholarship for a Postdoctoral Stay in the University of Granada.}
\author[G{\'{o}}mez-Torrecillas]{Jos{\'{e}} G{\'{o}}mez-Torrecillas}
\address{Department of Algebra, University of Granada, E-18071, Granada, Spain}
\email{gomezj@ugr.es}
\author[Lobillo]{F. J. Lobillo}
\address{CITIC and Department of Algebra, University of Granada, E-18071, Granada, Spain}
\email{jlobillo@ugr.es}
\author[Navarro]{Gabriel Navarro}
\address{CITIC and Department Computer Sciences and AI, University of Granada, E-18071, Granada, Spain}
\email{gnavarro@ugr.es}
\author[S{\'{a}}nchez-Hernández]{Jos{\'{e}} Patricio S{\'{a}}nchez-Hern{\'{a}}ndez}
\address{Department of Algebra, University of Granada, E-18071, Granada, Spain}
\email{jpsanchez@correo.ugr.es}
\theoremstyle{plain}
 \newtheorem{theorem}{Theorem}[section]
 \newtheorem{lemma}[theorem]{Lemma}
 \newtheorem{proposition}[theorem]{Proposition}
 \newtheorem{corollary}[theorem]{Corollary}
\theoremstyle{definition}
 \newtheorem{definition}[theorem]{Definition}
 \newtheorem{example}[theorem]{Example}
 \newtheorem{problem}[theorem]{Problem}
\theoremstyle{remark}
 \newtheorem{remark}[theorem]{Remark}
\newcommand{\field}{\mathbb{F}}
\begin{document}

\begin{abstract}
We give necessary and sufficient conditions on an Ore extension $A[x;\sigma,\delta]$, where $A$ is a finite dimensional algebra over a field $\field$, for being a Frobenius extension of the ring of commutative polynomials $\field[x]$. As a consequence, as the title of this paper highlights, we provide a negative answer to a problem stated by Caenepeel and Kadison.
\end{abstract}

\maketitle 

\section{Introduction}\label{sec:intro}

Frobenius extensions were introduced by Kasch \cite{Kasch:1954,Kasch:1961}, and by Nakayama and Tsuzuku \cite{Nakayama/Tsuzuku:1959,Nakayama/Tsuzuku:1960} as a generalization of the well known notion of Frobenius algebra. Of course the underlying idea was to recover the duality theory of Frobenius algebras in a more general setting. The notion of separable extension comes from the generalization of the well known notion of separable field extension. The classical definition of separable ring extension is due to Hirata and Sugano in \cite{Hirata/Sugano:1966}. Both notions, Frobenius and separable, have been extended to more general framework in category theory. 

As it is explained in the Introduction of \cite{Caenepeel/Kadison:2001}, deep connections between separable and Frobenius extensions were found from the very beginning. For instance, Eilenberg and Nakayama show in \cite{Eilenberg/Nakayama:1955} that finite dimensional semisimple algebras over a field are symmetric, hence Frobenius. A key result to extend this to algebras over commutative rings is due to Endo and Watanabe, concretely they show that separable, finitely generated, faithful and projective algebras over a commutative ring are symmetric \cite[Theorem 4.2]{Endo/Watanabe:1967}. Their ideas were connected to separable extensions, as defined in \cite{Hirata/Sugano:1966}, by Sugano, who shows that separable and centrally projective extensions are Frobenius, see \cite[Theorem 2]{Sugano:1970}. However, as Caenepeel and Kadison say ``it is implicit in the literature that there are several cautionary examples showing separable extensions are not always Frobenius extensions in the ordinary untwisted sense''. They provide one of these examples in \cite[\S 4]{Caenepeel/Kadison:2001} under the stronger hypothesis that the extension is split, but the Frobenius property is lost because the provided extension is not finitely generated. Split extensions are naturally considered since separability and splitting can be viewed as particular cases of the notion of separable module introduced in \cite{Sugano:1971}, see also \cite{Kadison:1996}. Biseparable extensions are therefore considered because they contains both notions of separable and split extensions under the same module theoretic approach. Biseparable extensions are finitely generated and projective, hence the example they provide is not a counter example of their main question: ``Are biseparable extensions Frobenius?''

This problem comes up again recently in the article \cite{Kadison:2019}, whether additional convenient equations are always satisfied: this is the same as asking if a biseparable bimodule is Frobenius. There are arguments in the monograph \cite{Kadison:1999} as evidence for thinking this might be true, as well as the weight of all classical examples.

In this paper we develop some techniques based in the Ore extensions introduced in \cite{Ore:1933} to provide a counter example to the previous question. Our example also gives a negative answer the same question but considering Frobenius extensions of the second kind as introduced by Nakayama and Tsuzuku in \cite{Nakayama/Tsuzuku:1960}. 

This paper is structured as follows. In section \ref{sec:preliminaires}, we recall precise definitions of Frobenius and biseparable extensions, and we recall again the main question we are going to answer. In section \ref{Frobenius} Frobenius extensions are lifted under Ore extensions, while similar results are obtained in section \ref{Biseparable} for biseparable extensions. Finally, in section \ref{sec:Example} the full counter example is built.

\section{Preliminaries}\label{sec:preliminaires}

We recall the notions of Frobenius, separable and split extensions. All along the paper $B$ and $C$ are arbitrary unital rings, whilst we reserve the letter $A$ for denoting an algebra over a field $\mathbb{F}$. Following, for instance, \cite{Nakayama/Tsuzuku:1959}, a unital ring extension $C \subseteq B$ is said to be Frobenius if $B$ is a finitely generated projective right $C$-module and there exists an isomorphism  $B \cong B^*=\operatorname{Hom}(B_C,C_C)$ of \(C-B\)-bimodules. Here, by $\operatorname{Hom}(B_C,C_C)$, we denote the set of morphisms of right $C$-modules from $B$ to $C$. The additive group $B^*$ is endowed with the standard \(C-B\)-bimodule structure given by $(c \chi b)(u)= c (\chi(bu))$ for any $\chi\in B^*$, $c\in C$ and $b,u\in B$. 

The notion of a Frobenius extension is right-left symmetric as observed in \cite[\S 1, page 11]{Nakayama/Tsuzuku:1959}, i.e. \(C \subseteq B\) is Frobenius if \(B\) is a finitely generated projective left \(C\)-module and there exists an isomorphism \(B \cong {}^*B\) of \(B-C\)-bimodules, where \({}^*B = \operatorname{Hom}({}_CB,{}_CC)\) is a \(B-C\)-bimodule in a analogous way.

This is a generalization of the well-known notion of Frobenius algebra over a field, namely, a finite dimensional $\field$-algebra $A$ is Frobenius if the following equivalent conditions hold:
\begin{enumerate}
\item there exists an isomorphism of right (or left) $A$-modules $A \cong A^*$ \label{FrobAlgIso}
\item there exists an associative and non-degenerate $\field$-bilinear form $\langle -,-\rangle:A\times A\to \field$ \label{FrobAlgBilin}
\item there exists a linear functional $\varepsilon: A \to \field$ whose kernel does not contain a non zero right (or left) ideal.\label{FrobAlgLin}
\end{enumerate}

\begin{remark}\label{FrobAlg}
The bijection between Frobenius forms (\ref{FrobAlgBilin}) and Frobenius functionals (\ref{FrobAlgLin}) on $A$ is as follows. If $\langle -,-\rangle: A \times A \to \field$ is a Frobenius form, then the rule $\varepsilon(a) = \langle 1,a \rangle$ for any $a\in A$ defines a Frobenius functional $\varepsilon:A\to \mathbb{F}$. Conversely, if $\varepsilon:A\to \field$ is a Frobenius functional, set $\langle a,b\rangle=\varepsilon(ab)$ for any $a,b\in A$ in order to get a Frobenius form. 

The correspondence between Frobenius functionals (\ref{FrobAlgLin}) and left $A$-isomorphisms (\ref{FrobAlgIso}) is given as follows. For any Frobenius functional $\varepsilon$, we may define $\alpha:A\to A^*$ as
$\alpha(a)(b) = \varepsilon(ab)$
for any $a,b\in A$, which becomes a left $A$-isomorphism. Conversely, for any left $A$-isomorphism $\alpha:A\to A^*$, the rule $\varepsilon(a)=\alpha(a)(1)$ for any $a\in A$ provides a Frobenius functional $\varepsilon$. See \cite[Theorem 3.15]{Lam:1999} for full details. In particular, for each \(\field\)-basis \(\{a_1, \dots, a_r\}\) of \(A\) there exists an \(\field\)-basis \(\{b_1, \dots, b_r\}\) of \(A\) such that \(\{\alpha(b_1), \dots, \alpha(b_r)\}\) is the dual basis of \(\{a_1, \dots, a_r\}\), i.e.
\begin{equation}\label{dualbasis}
\varepsilon(b_j a_i) = \alpha(b_j)(a_i) = \delta_{ij}.
\end{equation}
\end{remark}

Following \cite{Hirata/Sugano:1966}, the extension \(C \subseteq B\) is called separable if the canonical multiplication map
\[
\begin{split}
\mu : B \otimes_C B &\to B \\
b_1 \otimes b_2 &\mapsto b_1 b_2
\end{split}
\]
splits as a morphism of \(B\)-bimodules, i.e. there exists \(p \in B \otimes_C B\) such that \(bp = pb\) for all \(b \in B\) and \(\mu(p) = 1\). The splitting map is therefore determined by \(1 \mapsto p\). 

Finally, \(C \subseteq B\) is called split if the inclusion map \(C \to B\) splits as a morphism of \(C\)-bimodules, i.e. there exists a \(C\)-bimodule morphism \(\xi : B \to C\) such that \(\xi(1) = 1\). 

In \cite[Definition 2.4]{Caenepeel/Kadison:2001}, the notion of a separable module is extended to the concept of biseparable module. When particularizing to ring extensions, \cite[Lemma 3.3]{Caenepeel/Kadison:2001} says that \(C \subseteq B\) is called to be biseparable if one of the following equivalent conditions holds:
\begin{enumerate}
\item $B$ is biseparable as \(B-C\)-bimodule and finitely generated projective as left $C$-module.
\item $B$ is biseparable as \(C-B\)-bimodule and finitely generated projective as right $C$-module.
\item $B$ is biseparable as \(B-C\)-bimodule and as \(C-B\)-bimodule.
\item $C \subseteq B$ is split, separable and finitely generated projective as left $C$-module and as right $C$-module.
\end{enumerate}

Henceforth, motivated by the arguments provided in the Introduction, the following question is stated in \cite{Caenepeel/Kadison:2001}:

\begin{problem}\label{problem}\cite[Problem 3.5]{Caenepeel/Kadison:2001}\label{theproblem}
Are biseparable extensions Frobenius?
\end{problem}

The main aim of this paper is to build an example of a ring extension which is biseparable and not Frobenius, giving a negative answer to Problem \ref{problem}. Throughout the paper we assume that $A$ is a finite dimensional $\field$-algebra of dimension $r$. Let also denote by $\sigma:A \to A$ an algebra $\field$-automorphism and $\delta: A \to A$ a $\sigma$-derivation on $A$, i.e. \(\delta(ab) = \sigma(a) \delta(b) + \delta(a) b\) for all \(a,b \in A\). We denote by  $R$ the ring of (commutative) polynomials $\field[x]$ and by $S$ the Ore extension $A[x;\sigma,\delta]$, that is, the ring of polynomials with coefficients in $A$ written on the left whose product is twisted by the rule $xa=\sigma(a)x+\delta(a)$ for any $a\in A$. This notation is fixed throughout the rest of the paper. 

We give conditions on \(\sigma\) and \(\delta\) in order to get that \(R \subseteq S\) inherits the corresponding properties (separable, split, Frobenius) from \(\field \subseteq A\). A precise construction of \(A\), \(\sigma\) and \(\delta\) will lead to the counterexample.

\section{Lifting Frobenius extensions}\label{Frobenius}

Given $a\in A$, $n\geq 0$ and $0\leq i\leq n$, we denote by $N_i^n(a)$ the coefficient of degree $i$ when multiplying $x^n$ on the right by $a$ in $S$. That is to say, 
\begin{equation}\label{N}
x^na=\sum_{i=0}^nN_i^n(a)x^{i}, 
\end{equation}
and, for $\sum_{i= 0}^n g_ix^i \in S$, 
\begin{equation}\label{ga}
\left(\sum_{i = 0}^ng_ix^i\right)a = \sum_{i=0}^n \left(\sum_{k= i}^n g_kN_i^k(a)\right)x^i.
\end{equation}
We may then consider \(\field\)-linear operators $N_i^n:A \to A$ for every $i$ and $n$ with $0\leq i\leq n$. If we set $N_{i}^n=0$ whenever $i<0$ or $i > n$, then we obtain inductively
\begin{equation}\label{Nnmas1}
N_i^{n+1} = \sigma N_{i-1}^n + \delta N_i^n. 
\end{equation}

These maps were introduced in \cite{Lam/Leroy:1988}, where \(N_i^n\) is denoted by \(f_i^n\).  

The ring extension $R \subseteq S$ makes $S$ free of finite rank both as a left as a right $R$--module. More precisely, we have the following result.

\begin{lemma}\label{lemma 1a}  
Let $\{a_1,\ldots,a_r\}$ be an $\field$-basis of $A$. The following statements hold.
\begin{enumerate}
\item $\{a_1,\ldots ,a_r\}$ is a right basis of $S$ over $R$.
\item $\{a_1,\ldots,a_r\}$ is a left basis of $S$ over $R$.
\end{enumerate}
\end{lemma}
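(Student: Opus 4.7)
My plan is to treat the two parts separately: part (1) is a direct rearrangement using only that $\field$ is central in $S$, while part (2) requires induction on the $x$-degree together with the bijectivity of $\sigma$.

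For (1), every $f \in S$ has, by the very construction of the Ore extension, a unique expression $f = \sum_{i=0}^n g_i x^i$ with $g_i \in A$ (coefficients written on the left). Expanding $g_i = \sum_{j=1}^r \lambda_{ij} a_j$ with $\lambda_{ij} \in \field$ and using that elements of $\field$ are central in $S$, I would compute
\[
f = \sum_{i=0}^n \sum_{j=1}^r \lambda_{ij} a_j x^i = \sum_{j=1}^r a_j \left(\sum_{i=0}^n \lambda_{ij} x^i\right) = \sum_{j=1}^r a_j r_j,
\]
with $r_j \in R$. Uniqueness follows by reversing the argument: a vanishing sum $\sum_j a_j r_j = 0$ with $r_j = \sum_i \lambda_{ij} x^i$ forces $\sum_j \lambda_{ij} a_j = 0$ for each $i$, and $\field$-linear independence of $\{a_j\}$ in $A$ gives $\lambda_{ij} = 0$.

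For (2), right multiplication by elements of $A$ is twisted, so I proceed by induction on $n = \deg_x f$. The key observation, read off (\ref{Nnmas1}) together with $N_0^0 = \mathrm{id}$, is that the leading coefficient of $x^n a$ is $N_n^n(a) = \sigma^n(a)$. Since $\sigma$ is an $\field$-algebra automorphism, so is $\sigma^n$, hence $\{\sigma^n(a_1), \ldots, \sigma^n(a_r)\}$ is again an $\field$-basis of $A$. Given $f = \sum_{i=0}^n g_i x^i$ I solve $g_n = \sum_j \mu_j \sigma^n(a_j)$ uniquely for $\mu_j \in \field$, note that $f - \sum_j \mu_j x^n a_j$ has $x$-degree strictly less than $n$, and invoke the inductive hypothesis to write the remainder as $\sum_j s_j a_j$ with $s_j \in R$, concluding $f = \sum_j (\mu_j x^n + s_j) a_j$. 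Uniqueness follows by the same leading-coefficient trick applied through (\ref{ga}): if $\sum_j r_j a_j = 0$ with some $r_j \neq 0$, letting $N = \max_j \deg r_j$ and $\mu_j$ denote the $x^N$-coefficient of $r_j$, comparison of $x^N$-coefficients forces $\sum_j \mu_j \sigma^N(a_j) = 0$, contradicting the $\field$-linear independence of $\{\sigma^N(a_j)\}$.

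The main (and essentially only) obstacle is part (2), which genuinely uses the Ore structure: one needs both that the highest-order twist is $\sigma^n$ and that $\sigma$ is bijective. Bijectivity of $\sigma$, which is part of the standing hypotheses, is essential, for otherwise the map $a \mapsto \sigma^n(a)$ could fail to preserve a basis and both the inductive step and the uniqueness argument would collapse.
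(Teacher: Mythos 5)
Your proof is correct, but for part (2) it takes a genuinely different route from the paper. The paper dispatches part (1) as an easy computation (essentially your centrality-of-$\field$ rearrangement) and then obtains part (2) for free by an opposite-ring trick: it invokes the known identification $S^{op} = A^{op}[x;\sigma^{-1},-\delta\sigma^{-1}]$ and applies part (1) to this Ore extension, since a left $R$-basis of $S$ is the same as a right $R^{op}$-basis of $S^{op}$ and $R = R^{op}$. Your direct induction on the $x$-degree, using that the leading coefficient of $x^n a$ is $N_n^n(a) = \sigma^n(a)$ (which follows from \eqref{Nnmas1}) and that $\sigma^n$ carries a basis to a basis, is more self-contained: it avoids citing the structure of $S^{op}$ and makes explicit the leading-coefficient mechanism that the paper itself reuses later (the same $\sigma^n$-leading-term argument drives the injectivity and surjectivity steps in the proof of Theorem \ref{semiFrobext}). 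What the paper's approach buys is brevity and a reusable duality principle (any right-sided statement transfers to the left side via $S^{op}$, which is exactly how Theorem \ref{semiFrobtwosided} is proved); what yours buys is an elementary, hands-on argument whose only input is \eqref{ga} and \eqref{Nnmas1}. Note that both routes use bijectivity of $\sigma$ in an essential way --- the paper needs $\sigma^{-1}$ to even write down $S^{op}$ as an Ore extension, while you need $\sigma^n$ to preserve bases (injectivity alone would suffice here, since an injective endomorphism of the finite-dimensional space $A$ is automatically bijective, but this is a cosmetic point given the standing hypothesis that $\sigma$ is an automorphism).
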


\begin{proof} 
\emph{(1)} This is an easy computation. 

\emph{(2)} It is well known that \(S^{op} = A^{op}[x;\sigma^{-1},-\delta \sigma^{-1}]\) (see e.g. \cite[page 39, Exercise 2R]{Goodearl/Warfield:2004}).  Now, apply part (1) to $S^{op}$. 
\end{proof}

By $\{a_1^*, \dots, a_r^* \}$ we will denote the basis of the left $R$--module $S^*$ dual to an \(R\)-basis $\{a_1, \dots, a_r \}$ of \(S\) as right \(R\)-module, determined by the condition $a_i^*(a_j) = \delta_{ij}$.

The aim of this section is to characterize when $R \subseteq S$ is a Frobenius ring extension in terms of the $\sigma$--derivation $\delta$ acting on $A$. The key result to get such a characterization is the following theorem.

\begin{theorem}\label{semiFrobext} 
There exists a bijective correspondence between the following sets.
\begin{enumerate}
\item Frobenius functionals on the $\field$-algebra $A$.
\item Right $S$-isomorphisms from $S$ to $S^*$.
\end {enumerate}
\end{theorem}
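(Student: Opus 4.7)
The plan is to lift the Frobenius-functional / left-isomorphism dictionary of Remark~\ref{FrobAlg} from $\field\subseteq A$ to $R\subseteq S$ by extending a functional coefficient-wise in $x$, and to establish bijectivity via the dual basis.

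For $(1)\Rightarrow(2)$, given a Frobenius functional $\varepsilon:A\to\field$ I would define
\[
\tilde\varepsilon:S\to R, \qquad \tilde\varepsilon\Bigl(\sum_i a_i x^i\Bigr) = \sum_i \varepsilon(a_i)\, x^i.
\]
A direct computation (using that elements of $R$ commute with each other and that $\varepsilon$ is $\field$-linear) shows $\tilde\varepsilon$ is right $R$-linear, so $\tilde\varepsilon\in S^*$. Setting $\alpha_\varepsilon(s)(t) := \tilde\varepsilon(st)$, associativity in $S$ yields $\alpha_\varepsilon(su)(t) = \tilde\varepsilon(sut) = \alpha_\varepsilon(s)(ut) = (\alpha_\varepsilon(s)u)(t)$, so $\alpha_\varepsilon:S\to S^*$ is right $S$-linear.

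For bijectivity I would pick any $\field$-basis $\{a_1,\dots,a_r\}$ of $A$ and its $\varepsilon$-dual basis $\{b_1,\dots,b_r\}\subseteq A$ from Remark~\ref{FrobAlg}, so $\varepsilon(b_j a_i)=\delta_{ij}$. By Lemma~\ref{lemma 1a} both families are right $R$-bases of $S$. Since each $b_j a_i$ still lies in $A$, the definition of $\tilde\varepsilon$ gives $\alpha_\varepsilon(b_j)(a_i) = \varepsilon(b_j a_i) = \delta_{ij}$, i.e.\ $\alpha_\varepsilon(b_j) = a_j^*$. Thus $\alpha_\varepsilon$ carries the right $R$-basis $\{b_j\}$ of $S$ onto the family $\{a_1^*,\dots,a_r^*\}\subseteq S^*$, and the remaining task is to verify that this family is in fact a basis of $S^*$ for the right $R$-action $(\chi r)(u)=\chi(ru)$. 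I would argue this by induction on the $x$-degree using the normal form $x^n a=\sum_k N_k^n(a)x^k$ of \eqref{N} together with the invertibility of the leading term $N_n^n=\sigma^n$.

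For the converse $(2)\Rightarrow(1)$ I would invert the construction: put $b_j:=\alpha^{-1}(a_j^*)\in S$ and extract an $\field$-linear functional $\varepsilon$ on $A$ encoded by the relations $\alpha(1)(b_j a_i)=\delta_{ij}$; non-degeneracy of $\varepsilon$ then follows from injectivity of $\alpha$, because any nonzero element of $A$ in $\ker\varepsilon$ that generated a right ideal of $A$ would be annihilated by $\alpha(1)$, contradicting injectivity. The main obstacle throughout is the basis-theoretic step described above, where one must reconcile the given left $R$-basis presentation of $S^*$ with the right $R$-structure coming from restriction of the right $S$-action, and where the $N_i^n$-calculus combined with $\sigma$ being an automorphism does the heavy lifting.
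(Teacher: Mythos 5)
Your forward direction is correct, and your route to bijectivity is genuinely different from the paper's: instead of explicitly constructing a preimage of each $x^n a_i^*$ by a downward recursion on coefficients (and proving injectivity separately from the fact that $\ker\varepsilon$ contains no nonzero right ideal), you send the $\varepsilon$-dual basis $\{b_j\}$ to $\{a_j^*\}$ and reduce everything to showing that $\{a_1^*,\dots,a_r^*\}$ is a basis of $S^*$ for the right $R$-action $(\chi r)(u)=\chi(ru)$. That step does work as you sketch it: in the degree filtration, the passage from right coordinates $\sum_i a_i^* q_i$ to left coordinates $\chi(a_k)$ has leading term governed by the matrix of $\sigma^d$, which is invertible, so induction on degree gives both spanning and independence; a right $R$-linear map carrying a basis to a basis is then bijective. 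This is arguably cleaner than the paper's computation, at the cost of isolating the basis lemma.

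The converse, however, has a genuine gap. To extract $\varepsilon$ you need $\alpha(a)(1)=\alpha(1)(a)\in\field$ for every $a\in A$, equivalently $b_j:=\alpha^{-1}(a_j^*)\in A$, and you never prove this: you only record $b_j\in S$. Crucially, it does \emph{not} follow from your basis-theoretic step. That step shows $\alpha^{-1}$ carries the right $R$-basis $\{a_j^*\}$ of $S^*$ to a right $R$-basis of $S$, but a right $R$-basis of $S$ need not lie in $A$: for $r=2$ the set $\{a_1,\, a_1x+a_2\}$ is a right $R$-basis (the change-of-basis matrix over $\field[x]$ is unimodular but non-constant). Until $b_j\in A$ is known, the relations $\alpha(1)(b_ja_i)=\delta_{ij}$ involve genuine elements of $S$ and do not ``encode'' any $\field$-valued functional on $A$, so your non-degeneracy argument has nothing to run on. The paper closes exactly this hole with a degree argument that you would need to reproduce: writing $g_i=\alpha^{-1}(a_i^*)=\sum_{k=0}^{n_i}g_{ik}x^k$, right $S$-linearity gives $\alpha(g_i)(a_j)=\sum_k\sum_{m\le k}\alpha(g_{ik})\bigl(N_m^k(a_j)\bigr)x^m$, and comparing the top-degree terms of $\delta_{ij}=\alpha(g_i)(a_j)$ yields $\alpha(g_{in_i})\bigl(\sigma^{n_i}(a_j)\bigr)=0$ for all $j$; since $\{\sigma^{n_i}(a_1),\dots,\sigma^{n_i}(a_r)\}$ is a right $R$-basis of $S$ (Lemma \ref{lemma 1a}), $\alpha(g_{in_i})=0$, so $g_{in_i}=0$ by injectivity and hence $g_i\in A$. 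Two smaller omissions: your injectivity contradiction needs the upgrade from ``$\alpha(c)$ vanishes on $A$'' to ``$\alpha(c)=0$ on $S$'' (right $R$-linearity plus Lemma \ref{lemma 1a} again), and a bijective correspondence also requires checking that the two assignments are mutually inverse, which is routine once $\varepsilon$ is well defined but should be said.
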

\begin{proof}
Let $\varepsilon:A \rightarrow \field$ be a Frobenius functional on $A$.  

To define a right $S$--linear map $\alpha_\varepsilon:S\to S^*$ we just need to specify $\alpha_\varepsilon (1) \in S^*$.  For every $f=\sum_i f_ix^i\in S$, set 
\[
\alpha_\varepsilon(1)(f)=\sum_i \varepsilon(f_i)x^i.
\]

This map is indeed right $R$--linear, since
\[
\alpha_\varepsilon(1)(f x ) = \sum_{i} \varepsilon(f_i) x^{i+1}  = \alpha_\varepsilon(1)(f)x. 
\]
Note that, by the right $S$--module structure of $S^*$, one has, for every $f, g \in S$,
\[
\alpha_\varepsilon (f)(g) = \alpha_\varepsilon (1) (fg) = \alpha_\varepsilon(fg)(1). 
\]

Let $f=\sum_{i=0}^nf_ix^{i}\in S$ with $f_n\neq 0$ such that $\alpha_\varepsilon(f)=0$. Then, for any $b\in A$, we get from \eqref{ga} that
\[
0=\alpha_\varepsilon(f)(b)= \alpha_\varepsilon(fb)(1) = \sum_{i=0}^n\varepsilon\left(\sum_{k=i}^nf_kN_i^k(b)\right)x^i.
\]
In particular, $\varepsilon(f_nN_n^n(b))=\varepsilon(f_n\sigma^n(b))=0$  for every $b\in A$. Since $\sigma$ is an automorphism,  $\varepsilon(f_nb) =0$ for all $b\in A$ and, thus, the kernel of $\varepsilon$ contains the right ideal generated by $f_n$, a contradiction. Thus $\alpha_\varepsilon$ is injective.

Finally, it remains to prove that $\alpha_\varepsilon$ is surjective. Let $\{a_1, \dots ,a_r\}$ be an $\field$-basis of $A$.  Let us show that $x^na_i^* \in \operatorname{Im} \alpha_{\epsilon}$ for all $n\geq 0$ and $1\leq i \leq r$, which yields the result.

For any $n\geq 0$, since $\{\sigma^{n}(a_1), \ldots , \sigma^n(a_r)\}$ is an $\field$-basis of $A$, by \eqref{dualbasis}, there exist $b^{(n)}_1,\ldots,b^{(n)}_r\in A$ such that  
\begin{equation}\label{eq bi ai}
\varepsilon\left( b^{(n)}_{i} \sigma^n(a_j)\right) = \delta_{ij}
\end{equation}
for all $1\leq i,j \leq r$. For each \(1 \leq i \leq r\), set
\[
g^{(i)}=\sum_{k=0}^ng_{k}^{(i)} x^k\in S,
\]
where $g_n^{(i)}=b_i^{(n)}$ and, for each $0\leq m\leq n-1$,
\begin{equation} \label{eq gn-1} 
 g_m^{(i)}=-\sum_{\ell=1}^rb_{\ell}^{(m)} \left( \sum_{k={m+1}}^n \varepsilon \left( g_k^{(i)} N_m^{k}(a_{\ell})\right)\right).
\end{equation}
Then, by \eqref{eq bi ai}, for all $1\leq i,j\leq r$,
\begin{equation} \label{eq x}
\varepsilon \left(g_n^{(i)} \sigma^{n}(a_j) \right) = \varepsilon\left( b^{(n)}_{i} \sigma^n(a_j)\right) = \delta_{ij}
\end{equation}
and
\[
\begin{split}
\varepsilon \left( g_m^{(i)} N_m^m(a_j) \right) &= \varepsilon \left( g_m^{(i)} \sigma^m(a_j)\right) \\
&\stackrel{\eqref{eq gn-1}}{=} \varepsilon \left( -\sum_{\ell=1}^r\left(b_{\ell}^{(m)} \left(\sum_{k={m+1}}^n \varepsilon \left( g_k^{(i)} N_m^{k}(a_{\ell})\right) \right)\right) \sigma^m(a_j)\right) \\
&= -\sum_{\ell=1}^r \sum_{k={m+1}}^n \varepsilon \left( g_k^{(i)} N_m^{k}(a_{\ell})\right) \varepsilon \left( b_{\ell}^{(m)} \sigma^m(a_j) \right) \\
&\stackrel{\eqref{eq x}}{=} - \sum_{k={m+1}}^n \varepsilon \left( g_k^{(i)} N_m^{k}(a_{j})\right).
\end{split}
\]
Hence
\begin{equation}\label{yy}
\sum_{k={m}}^n \varepsilon \left( g_k^{(i)} N_m^{k}(a_{j})\right) = 0
\end{equation}
for \(1 \leq i,j \leq r\), \(0 \leq m \leq n-1\). Now
\[
\begin{split} 
\alpha_\varepsilon(g^{(i)})(a_j) &= \alpha_{\varepsilon}(g^{(i)}a_j)(1)  \\
& \stackrel{\eqref{ga}}{=} \sum_{m= 0}^n\varepsilon \left(\sum_{k=m}^n g^{(i)}_k N_m^k(a)\right)x^m\\
& \stackrel{\eqref{yy},\eqref{eq x}}{=} x^na_i^*(a_j).
\end{split}
\]
So $x^na_i^* = \alpha_\varepsilon(g^{(i)}) \in \operatorname{Im} \alpha_{\varepsilon}$, as required.

Conversely, let $\alpha:S\to S^*$ be a right $S$-isomorphism. We would like to define $\varepsilon_\alpha:A\to \field$  as $\varepsilon_\alpha(a)=\alpha(a)(1)$ for $a \in A$. We need first to show that $\alpha(a)(1)\in \field$ for every $a\in A$.  Consider again the $\field$--basis $\{a_1,\ldots ,a_r\}$ of $A$, and set  $g_i = \alpha^{-1}(a_i^*)$ for $i = 1, \dots, r$. If we prove that the $\field$--linearly independent set $\{g_1, \dots, g_r \}$ is contained in $A$, then it becomes an $\field$--basis of $A$. 

Write $g_i=\sum_{k=0}^{n_i}g_{ik}x^k$. Therefore, 
\[
\begin{split}
\delta_{ij} &= a_i^*(a_j) \\
&= \alpha(g_i)(a_j) \\
&= \sum_{k=0}^{n_i}\alpha (g_{ik}x^k )(a_j) \\
&\overset{\ast}{=} \sum_{k=0}^{n_i}\alpha (g_{ik})(x^k a_j) \\
&= \sum_{k=0}^{n_i}\alpha (g_{ik}) \left( \sum_{m=0}^k N_m^k(a_j)x^m \right) \\
&\overset{\dagger}{=} \sum_{k=0}^{n_i}  \sum_{m=0}^k\alpha(g_{ik})(N_m^k(a_j))x^m,
\end{split}
\]
where equality $\ast$ comes from that $\alpha$ is a right $S$-morphism, and $\dagger$ is due to $\alpha(g_{ik})$ is a right $R$-morphism for every $k$ and $i$. Now, if $n_i\geq 1$, then
\[
\alpha(g_{i n_i})(\sigma^{n_i}(a_j))=0
\]
for every $j\in \{1,\ldots, r\}$. By Lemma \ref{lemma 1a},  $\{\sigma^{n_i}(a_1),...,\sigma^{n_i}(a_r)\}$ is a right $R$-basis of $S$, so $\alpha(g_{in_i})=0$ and then $g_{in_i}=0$. Hence, $n_i$ must be zero, so $g_i\in A$ for each $1\leq i\leq r$. Therefore, the $\field$--linear map $\alpha$ satisfies that $\alpha(g_i)(a_j) = \delta_{ij}$, for  the $\field$--bases $\{g_1, \dots, g_r \}$ and $\{a_1, \dots, a_r\}$ of $A$. This obviously implies that $\alpha(a)(b) \in \field$ for every $a, b \in A$, and that the bilinear form on $A$ given by $\langle a,b \rangle = \alpha(a)(b)$ is non-degenerate. Therefore, $\varepsilon_\alpha$ is a well defined Frobenius functional on $A$.

It remains to prove that both constructions are inverse one to each other. Indeed, let $\varepsilon$ be a Frobenius functional on $A$. Keeping the previous notation, for any $a\in A$, 
\[
\varepsilon_{\alpha_\varepsilon}(a)=\alpha_\varepsilon(a)(1)=\varepsilon(a).
\]
On the other hand, let $\alpha$ be a $S$-right isomorphism from $S$ to $S^*$. We want to check that $\alpha_{\varepsilon_\alpha}=\alpha$. Since both are right $S$--linear maps, it is enough if we prove that $\alpha_{\varepsilon_\alpha}(1) =\alpha (1)$. And these two maps are right $R$--linear, so that the following computation, for $a \in A$,  suffices:
\[
\alpha_{\varepsilon_\alpha}(1)(a)  = \varepsilon_\alpha(a) = \alpha (a)(1) = \alpha(1)(a).
\]
\end{proof}

Condition (1) in Theorem \ref{semiFrobext} is quite close to the notion of Frobenius extension, removing the need of being a left \(R\)-module morphism. We have not found in the literature that this condition has been introduced and studied. For this reason, let us now introduce semi Frobenius extensions. 

\begin{definition}
A unital ring extension $C \subseteq B$ is said to be right (resp. left) semi Frobenius if $B$ is a finitely generated projective right (resp. left) $C$-module and there exists an isomorphism \(B \cong B^*\) of right \(B\)-modules (resp. an isomorphism \(B \cong {}^*B\) of left \(B\)-modules).
\end{definition}

Our aim now is to prove that $A$ is a Frobenius algebra over $\field$ if and only if the extension $R \subseteq S$ is left or right semi Frobenius. 

\begin{theorem}\label{semiFrobtwosided}
Let \(A\) be an \(\field\)-algebra. The following statements are equivalent:
\begin{enumerate}
\item $A$ is a Frobenius $\field$-algebra,
\item the extension $R \subseteq S$ is right semi Frobenius,
\item the extension $R \subseteq S$ is left semi Frobenius.
\end{enumerate}
\end{theorem}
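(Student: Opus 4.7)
My plan is to deduce (1) $\Leftrightarrow$ (2) directly from Theorem \ref{semiFrobext} together with Lemma \ref{lemma 1a}, and then to obtain (1) $\Leftrightarrow$ (3) by a symmetry argument that passes to the opposite ring.

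For (1) $\Leftrightarrow$ (2), I would first note that, by the equivalences (\ref{FrobAlgIso})--(\ref{FrobAlgLin}) recorded in the preliminaries, $A$ is a Frobenius $\field$-algebra if and only if $A$ admits a Frobenius functional. Theorem \ref{semiFrobext} gives a bijection between such functionals and right $S$-module isomorphisms $S \to S^*$. Combining this with Lemma \ref{lemma 1a}(1), which shows that $S$ is free of rank $r$, hence finitely generated projective, as a right $R$-module, is exactly the statement that $R \subseteq S$ is right semi Frobenius.

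For (1) $\Leftrightarrow$ (3) the key idea is to dualize via opposite rings. A left $R$-module morphism $S \to R$ is the same data as a right $R^{op}$-module morphism $S^{op} \to R^{op}$, and this identification sends the left $S$-action on ${}^*S$ to the right $S^{op}$-action on $(S^{op})^*$. Since $R = \field[x]$ is commutative one has $R^{op} = R$, and (as already used in the proof of Lemma \ref{lemma 1a}) $S^{op} = A^{op}[x;\sigma^{-1},-\delta\sigma^{-1}]$ is again an Ore extension of the type considered in this section, with $A^{op}$ in place of $A$. Hence $R \subseteq S$ is left semi Frobenius if and only if $R \subseteq S^{op}$ is right semi Frobenius. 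Applying the already-proved equivalence (1) $\Leftrightarrow$ (2) to the pair $(A^{op}, S^{op})$ rephrases this as: $A^{op}$ is a Frobenius $\field$-algebra. Finally, any of the characterizations (\ref{FrobAlgIso})--(\ref{FrobAlgLin}) is manifestly self-opposite (a Frobenius functional on $A$ is also one on $A^{op}$, and conversely), so $A^{op}$ is Frobenius if and only if $A$ is.

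Since the argument is essentially an assembly of Theorem \ref{semiFrobext} with the opposite-ring identification from Lemma \ref{lemma 1a}, I do not expect any genuine obstacle; the only point requiring care is the bookkeeping of left versus right structures in the duality step used for (1) $\Leftrightarrow$ (3).
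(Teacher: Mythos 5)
Your proposal is correct and takes essentially the same route as the paper's own proof: the equivalence (1)\,$\Leftrightarrow$\,(2) is Theorem~\ref{semiFrobext} (with Lemma~\ref{lemma 1a} supplying that $S$ is finitely generated projective over $R$), and (1)\,$\Leftrightarrow$\,(3) is obtained by passing to $S^{op} = A^{op}[x;\sigma^{-1},-\delta\sigma^{-1}]$ over $R = R^{op}$ and applying the first equivalence to $A^{op}$. Your additional bookkeeping---identifying ${}^*S$ with $(S^{op})^*$ and noting that a Frobenius functional on $A$ is also one on $A^{op}$---only makes explicit steps the paper leaves implicit.
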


\begin{proof}
The equivalence between (1) and (2) is Theorem \ref{semiFrobext}. 

In order to check the equivalence (1) if and only if (3), observe that \(A\) is Frobenius if and only if \(A^{op}\) is Frobenius. By Theorem \ref{semiFrobext}, \(A^{op}\) is a Frobenius \(\field\)-algebra if and only if \(\field[x] \subseteq A^{op}[x;\sigma^{-1},-\delta \sigma^{-1}]\) is right semi Frobenius. Since \(\field[x] = R = R^{op}\) and \(S^{op} = A^{op}[x;\sigma^{-1},-\delta \sigma^{-1}]\) (see e.g. \cite[page 39, Exercise 2R]{Goodearl/Warfield:2004}), it follows that \(A^{op}\) is a Frobenius \(\field\)-algebra if and only if \(R \subseteq S\) is left semi Frobenius.
\end{proof}

\begin{remark}\label{leftright}
Although, by Theorems \ref{semiFrobext} and \ref{semiFrobtwosided}, $R\subseteq S$ is left semi Frobenius if and only if it is right semi Frobenius,  it is an open question to know if, in general, the notion of semi Frobenius extension is left-right symmetric, as it does for Frobenius extensions, see \cite[\S 1, page 11]{Nakayama/Tsuzuku:1959}.
\end{remark}

We now refine the latter results in the realm of Frobenius extensions. 

\begin{theorem}\label{bijectionFrob}
There exists a bijective correspondence between the sets of
\begin{enumerate}
\item \(R-S\)-isomorphisms from $S$ to $S^*$.
\item Frobenius functionals $\varepsilon:A\to \field$ satisfying $\varepsilon\sigma=\varepsilon$ and $\varepsilon\delta=0$.
\item Frobenius forms $\langle -,- \rangle :A\times A\to \field$ satisfying the conditions $\langle a,b\rangle=\langle\sigma(a),\sigma(b)\rangle$ and $\langle \sigma(a), \delta(b)\rangle +\langle\delta(a),b\rangle=0$ for all $a,b\in A$.
\end {enumerate}
\end{theorem}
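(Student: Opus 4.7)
The strategy is to deduce this theorem from the two correspondences already at hand. Remark \ref{FrobAlg} gives a bijection between Frobenius functionals $\varepsilon$ and Frobenius forms $\langle-,-\rangle$ on $A$, implemented by $\langle a,b\rangle=\varepsilon(ab)$ and $\varepsilon(a)=\langle 1,a\rangle$. Theorem \ref{semiFrobext} gives a bijection between Frobenius functionals $\varepsilon$ and right $S$-isomorphisms $\alpha_\varepsilon\colon S\to S^*$. What remains is to check, first, that these bijections match (2) with (3) precisely when the two extra compatibilities hold, and second, that $\alpha_\varepsilon$ is additionally left $R$-linear exactly when $\varepsilon\sigma=\varepsilon$ and $\varepsilon\delta=0$.

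For (2)$\Leftrightarrow$(3), I would substitute $\langle a,b\rangle=\varepsilon(ab)$ into the two conditions. Since $\sigma$ is a ring homomorphism, $\langle\sigma(a),\sigma(b)\rangle=\varepsilon(\sigma(ab))$, so $\varepsilon\sigma=\varepsilon$ yields $\langle\sigma(a),\sigma(b)\rangle=\langle a,b\rangle$; setting $a=1$ and using $\sigma(1)=1$ recovers the converse. For the second condition, expanding $\varepsilon(\delta(ab))$ by the $\sigma$-derivation rule shows it is equivalent to $\langle\sigma(a),\delta(b)\rangle+\langle\delta(a),b\rangle=0$ for all $a,b\in A$; the converse is then obtained by specialising $a=1$ and using $\delta(1)=0$, which follows from $\delta(1)=\delta(1\cdot 1)=2\delta(1)$.

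For (1)$\Leftrightarrow$(2), by Theorem \ref{semiFrobext} it suffices to identify which right $S$-isomorphisms $\alpha_\varepsilon$ happen to be left $R$-linear as well. Because $R=\field[x]$, this reduces to the family of identities $\alpha_\varepsilon(x^n)=x^n\cdot\alpha_\varepsilon(1)$ in $S^*$ for all $n\ge 0$. Both sides are right $R$-linear in their argument (using that $R$ is commutative), so by Lemma \ref{lemma 1a} it suffices to evaluate at an arbitrary $a\in A$. Using right $S$-linearity $\alpha_\varepsilon(x^n)=\alpha_\varepsilon(1)\cdot x^n$, together with the definition of $\alpha_\varepsilon(1)$ and formula \eqref{N}, the left-hand side evaluated at $a$ becomes $\sum_{j=0}^{n}\varepsilon(N_j^n(a))\,x^j$ while the right-hand side is $\varepsilon(a)\,x^n$. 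Hence left $R$-linearity is equivalent to
\[
\varepsilon(N_n^n(a))=\varepsilon(a) \quad\text{and}\quad \varepsilon(N_j^n(a))=0 \text{ for } 0\le j<n,
\]
for every $a\in A$ and $n\ge 0$.

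The remaining step, which is the main (if mild) technical point, is to show this infinite family of identities is equivalent to $\varepsilon\sigma=\varepsilon$ together with $\varepsilon\delta=0$. One direction is immediate: the case $n=1$, where $N_1^1=\sigma$ and $N_0^1=\delta$, delivers both equations. For the converse I would induct on $n$ using \eqref{Nnmas1}: assuming the pattern at level $n$, the identity $\varepsilon\circ N_i^{n+1}=\varepsilon\circ\sigma\circ N_{i-1}^n+\varepsilon\circ\delta\circ N_i^n=\varepsilon\circ N_{i-1}^n$ propagates it to level $n+1$, with base case $N_0^0=\mathrm{id}_A$. Combining this refinement of Theorem \ref{semiFrobext} with the matched correspondence of Remark \ref{FrobAlg} then completes the bijection between (1), (2) and (3).
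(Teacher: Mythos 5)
Your proposal is correct and follows essentially the same route as the paper: both factor the statement through the bijection of Theorem \ref{semiFrobext} and the form/functional dictionary of Remark \ref{FrobAlg}, and then characterize when $\alpha_\varepsilon$ is additionally left $R$-linear. The only local divergence is that where you verify $\alpha_\varepsilon(x^n)=x^n\alpha_\varepsilon(1)$ for \emph{all} $n\geq 0$ via an induction on \eqref{Nnmas1}, the paper notes that right $S$-linearity reduces left $R$-linearity to the single condition $\alpha_\varepsilon(x)=x\alpha_\varepsilon(1)$ (your case $n=1$), making the induction correct but redundant --- while you, conversely, spell out the (2)$\Leftrightarrow$(3) verification (including $\delta(1)=0$) that the paper delegates to Remark \ref{FrobAlg} in one line.
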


\begin{proof}
In order to prove the bijection between \emph{(1)} and \emph{(2)}  it is enough to show that left $R$-linearity of a right $S$-isomorphism $\alpha:S\to S^*$ is equivalent to the conditions described in \emph{(2)} on the corresponding Frobenius functional $\varepsilon$ under the bijection stated in by Theorem \ref{semiFrobext}. 

Now, $\alpha$ is left $R$--linear if and only if $\alpha(xf) = x\alpha(f)$ for every $f \in S$. But, since $\alpha$ is right $S$--linear, the latter is equivalent to the condition $\alpha(x) = x\alpha(1)$. Both $\alpha(x)$ and $x\alpha(1)$ are right $R$--linear maps, so, they are equal if and only if $\alpha(x)(a) = (x\alpha(1))(a)$ for every $a \in A$.
Thus, from the computations 
\[
\alpha(x)(a) = \alpha(xa)(1) = \alpha(\sigma(a)x + \delta(a))(1) = \varepsilon(\sigma(a)) x + \varepsilon(\delta(a)),
\]
\[
(x\alpha(1))(a) = x \alpha(1)(a) = x \alpha(a)(1) = x \varepsilon(a) = \varepsilon(a)x,
\]
we get that $\alpha$ is left $R$--linear if and only if $\varepsilon(\sigma(a)) = \varepsilon(a)$ and $\varepsilon(\delta(a)) = 0$ for every $a \in A$.

The bijection between \emph{(2)} and \emph{(3)} follows from the bijection between Frobenius forms and Frobenius functionals explained in Remark \ref{FrobAlg}.
\end{proof}

The following direct consequence of Theorem \ref{bijectionFrob} is the characterization which will be used to build an example of biseparable extension which is not Frobenius. 

\begin{theorem}\label{Frobext}
$R \subseteq S$ is Frobenius if and only if  there exists a Frobenius functional $\varepsilon:A\to \field$ verifying  $\varepsilon\sigma=\varepsilon$ and $\varepsilon\delta=0$.
\end{theorem}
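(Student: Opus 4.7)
The plan is to observe that this theorem is essentially a repackaging of Theorem \ref{bijectionFrob} together with Lemma \ref{lemma 1a}, so the proof should be short and direct. By definition, the extension $R \subseteq S$ is Frobenius precisely when two conditions hold: $S$ is finitely generated projective as a right $R$-module, and there exists an $R$-$S$-bimodule isomorphism $S \cong S^*$.

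First I would dispose of the projectivity condition by citing Lemma \ref{lemma 1a}: any $\field$-basis $\{a_1, \dots, a_r\}$ of $A$ is simultaneously a right (and left) $R$-basis of $S$, so $S$ is free of finite rank as a right $R$-module, in particular finitely generated projective. This condition therefore holds unconditionally and adds nothing to the hypothesis.

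Consequently, $R \subseteq S$ is Frobenius if and only if there exists an $R$-$S$-bimodule isomorphism from $S$ to $S^*$. This is precisely item (1) in Theorem \ref{bijectionFrob}, and that theorem supplies a bijection between such bimodule isomorphisms and the Frobenius functionals $\varepsilon : A \to \field$ satisfying $\varepsilon \sigma = \varepsilon$ and $\varepsilon \delta = 0$, which are the objects in item (2). Existence on one side of this bijection is equivalent to existence on the other, which is exactly the statement of the theorem.

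There is no substantial obstacle here, since the real work has already been done in Theorems \ref{semiFrobext} and \ref{bijectionFrob}; the only thing worth flagging explicitly is that Lemma \ref{lemma 1a} is what allows us to drop the finite-generation-and-projectivity clause without further hypotheses on $A$, $\sigma$ or $\delta$.
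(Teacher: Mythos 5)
Your proposal is correct and matches the paper exactly: the paper presents Theorem \ref{Frobext} as a ``direct consequence of Theorem \ref{bijectionFrob}'', with the finitely-generated-projective condition holding automatically because Lemma \ref{lemma 1a} makes $S$ free of finite rank over $R$, precisely as you argue. Your write-up simply makes explicit the two-step reduction the paper leaves implicit.
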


We finish the section showing a family of examples of left and right semi Frobenius,  but not Frobenius, extensions.

\begin{example}\label{seminoFrob} 
Let $p$ be a prime number and $\field_p$ the finite field of $p$ elements. Consider some $n>1$,  and the field extension $\field_p \subseteq \field_{p^n}$. Then $\field_{p^n}$ is a Frobenius $\field_{p}$-algebra. Let $\tau:\field_{p^n}\to \field_{p^n}$ be the Frobenius automorphism, i.e. $\tau(x)=x^p$ for any $x\in \field_{p^n}$. Then, there exists $\alpha\in \field_{p^n}$ such that $\{\alpha, \tau(\alpha),...,\tau^{n-1}(\alpha)\}$ is an $\field_{p}$-basis of $\field_{p^n}$. We set then the $\tau$-derivation $\delta:\field_{p^n}\rightarrow \field_{p^n}$ given by 
\[
\delta(b)=(\tau(b)-b)\frac{\alpha}{\tau(\alpha)-\alpha}
\]
for any $b\in \field_{p^n}$. By Theorem \ref{semiFrobtwosided}, $\field_{p}[x] \subseteq \field_{p^n}[x;\sigma,\delta]$ is left and right semi-Frobenius.  Nevertheless, it is not Frobenius. Indeed, by Theorem \ref{Frobext}, $\field_p[x] \subseteq \field_{p^n}[x;\tau,\delta]$ is Frobenius if and only if there exists a Frobenius functional $\varepsilon:\field_{p^n}\to \field_p$ such that $\varepsilon\tau=\varepsilon$ and $\varepsilon\delta=0$. But, in such a case, since $\delta(\alpha)=\alpha$, 
\[
0=\varepsilon(\delta(\alpha))=\varepsilon(\alpha)=\varepsilon(\tau(\alpha))=\cdots=\varepsilon(\tau^{n-1}(\alpha)).
\]
So that $\varepsilon=0$.
\end{example}

\section{Lifting biseparable extensions}\label{Biseparable}

In this section we aim to provide conditions for ensuring that the extension $R\subseteq S$ is biseparable. Since, by \cite[Lemma 3.3]{Caenepeel/Kadison:2001} and Lemma \ref{lemma 1a}, this is so if and only if  $R\subseteq S$ is separable and split, we deal with both notions independently. Let us first analyze the property of being split.

Let $C \subseteq B$ be a ring extension, $\sigma:B \to B$ an automorphism of $B$ and $\delta:B \to B$ a $\sigma$-derivation on $B$ such that $\sigma(C)\subseteq C$ and $\delta(C)\subseteq C$.

\begin{proposition}\label{split} 
 Suppose that $C \subseteq B$ is split and $\xi:B\to C$ is a \(C\)-bimodule morphism with  $\xi\sigma=\sigma\xi$, $\xi\delta=\delta \xi$ and $\xi(1)=1$, then $C[x;\sigma,\delta] \subseteq B[x;\sigma,\delta]$ is split. 
\end{proposition}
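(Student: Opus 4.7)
The plan is to define $\tilde\xi : B[x;\sigma,\delta] \to C[x;\sigma,\delta]$ coefficient-wise by
\[
\tilde\xi\!\left(\sum_i b_i x^i\right) = \sum_i \xi(b_i)\, x^i,
\]
and check that it is a $C[x;\sigma,\delta]$-bimodule morphism sending $1$ to $1$. The condition $\tilde\xi(1)=1$ is immediate from $\xi(1)=1$, and additivity is clear. So the work is reduced to verifying left and right $C[x;\sigma,\delta]$-linearity.

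For left linearity, since $C[x;\sigma,\delta]$ is generated as a ring by $C$ and $x$, it suffices to check $\tilde\xi(cf) = c\tilde\xi(f)$ for $c \in C$ and $\tilde\xi(xf) = x\tilde\xi(f)$, for $f = \sum_i b_i x^i \in B[x;\sigma,\delta]$. The first follows at once from left $C$-linearity of $\xi$ applied to each coefficient. For multiplication by $x$, expand
\[
xf = \sum_i \sigma(b_i)\,x^{i+1} + \sum_i \delta(b_i)\,x^i,
\]
apply $\tilde\xi$, and use precisely the commutation relations $\xi\sigma = \sigma\xi$ and $\xi\delta = \delta\xi$ to pull the $\xi$ inside past $\sigma$ and $\delta$; the result is $x\tilde\xi(f)$.

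For right linearity it again suffices to handle right multiplication by elements of $C$ and by $x$. Multiplication on the right by $x$ just shifts degrees, so $\tilde\xi(fx) = \tilde\xi(f)x$ is obvious. For $c \in C$, formula \eqref{ga} gives
\[
fc = \sum_i \Bigl(\sum_{k\ge i} b_k N_i^k(c)\Bigr) x^i,
\]
and the same formula computed inside $C[x;\sigma,\delta]$ yields
\[
\tilde\xi(f)\,c = \sum_i \Bigl(\sum_{k\ge i} \xi(b_k)\, N_i^k(c)\Bigr) x^i.
\]
Here I use the key observation that, because $\sigma(C)\subseteq C$ and $\delta(C)\subseteq C$, the recursion \eqref{Nnmas1} forces $N_i^k(c)\in C$ for every $c\in C$; therefore the operators $N_i^k$ computed in $B[x;\sigma,\delta]$ restrict to those of $C[x;\sigma,\delta]$ on elements of $C$. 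Consequently, right $C$-linearity of $\xi$ gives $\xi(b_k N_i^k(c)) = \xi(b_k) N_i^k(c)$, and matching coefficients yields $\tilde\xi(fc) = \tilde\xi(f)c$.

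There is no real obstacle here; the argument is a direct verification. The only place where one has to be a little careful is the last step, to notice that the hypothesis $\sigma(C), \delta(C) \subseteq C$ is exactly what is needed to guarantee $N_i^k(c) \in C$ and thereby invoke right $C$-linearity of $\xi$ coefficient-wise.
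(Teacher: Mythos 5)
Your proof is correct and follows essentially the same route as the paper: define the map coefficient-wise, verify left linearity on $c \in C$ and on $x$ using $\xi\sigma = \sigma\xi$ and $\xi\delta = \delta\xi$, and verify right linearity via the expansion \eqref{ga} together with right $C$-linearity of $\xi$ applied to each coefficient. Your one addition --- making explicit that $\sigma(C), \delta(C) \subseteq C$ and the recursion \eqref{Nnmas1} force $N_i^k(c) \in C$, which is what licenses the step $\xi(b_k N_i^k(c)) = \xi(b_k) N_i^k(c)$ --- is a point the paper's proof uses silently, so you have if anything been slightly more careful.
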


\begin{proof}
We define $\widehat{\xi}:B[x;\sigma, \delta] \to C[x;\sigma, \delta]$ as, for any $f=\sum_{i=0}^n b_ix^i\in B[x;\sigma,\delta]$,
\[
\widehat{\xi}(f )=\sum_{i=0}^n\xi(b_i)x^{i}.
\]
We check that $\widehat{\xi}$ is a $C[x;\sigma,\delta]$-bimodule morphism. Let  $a\in C$ and $f=\sum_{i=0}^nb_ix^{i}\in B[x;\sigma,\delta]$, 
\[
\begin{split}
\widehat{\xi}(xf) &= \widehat{\xi}\left (x\sum_{i=0}^nb_ix^{i}\right )\\
&= \widehat{\xi}\left (\sum_{i=0}^n\sigma(b_i)x^{i+1}+\delta(b_i)x^{i}\right )\\
&= \sum_{i=0}^n\xi(\sigma(b_i))x^{i+1}+\sum_{i=0}^n\xi(\delta(b_i))x^{i}\\
&= \sum_{i=0}^n\sigma(\xi(b_i))x^{i+1}+\sum_{i=0}^n\delta(\xi(b_i))x^{i}\\
&= x\sum_{i=0}^n\xi(b_i)x^{i}\\
&= x\widehat{\xi}(f),
\end{split}
\]
and
\[
\widehat{\xi} (af ) =\widehat{\xi}\left (\sum_{i=0}^nab_ix^{i}\right ) = \sum_{i=0}^n\xi(ab_i)x^{i} = a\sum_{i=0}^n\xi(b_i)x^{i} = a\widehat{\xi}(f),
\]
so $\widehat{\xi}$ is left $C[x;\sigma,\delta]$-linear. Analogously,
\[
\widehat{\xi}(fx) = \widehat{\xi}\left (\sum_{i=0}^nb_ix^{i+1}\right ) = \sum_{i=0}^n\xi(b_i)x^{i+1} = \left (\sum_{i=0}^n\xi(b_i)x^{i}\right )x = \widehat{\xi}(f)x,
\]
and
\[
\begin{split}
\widehat{\xi}(fa)
&= \widehat{\xi}\left (\sum_{i=0}^n b_i \left (\sum_{k=0}^{i}N_k^{i}(a)x^{k}\right) \right)\\
&= \sum_{i=0}^n\sum_{k=0}^{i}\xi(b_iN_k^{i}(a))x^{k}\\
&= \sum_{i=0}^n\sum_{k=0}^{i}\xi(b_i)N_k^{i}(a)x^{k}\\
&= \sum_{i=0}^n\xi(b_i)\left (\sum_{k=0}^{i}N_k^{i}(a)x^{k}\right )\\
&= \sum_{i=0}^n\xi(b_i)x^{i}a\\
&= \widehat{\xi}(f)a,\\
\end{split}
\]
so $\widehat{\xi}$ is right $C[x;\sigma,\delta]$-linear. Clearly, $\widehat{\xi}(1)=1$, and thus $B[x;\sigma,\delta] \subseteq C[x;\sigma,\delta]$ is split. 
\end{proof}

\begin{corollary}\label{corsplit}
If there exists an \(\field\)-linear  map \(\xi: A \to \field\) such that \(\xi(1) = 1\), \(\xi \sigma = \xi\) and \(\xi \delta = 0\), then \(R \subseteq S\) is split. 
\end{corollary}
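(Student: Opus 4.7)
The plan is to apply Proposition \ref{split} directly with $C = \field$ and $B = A$, so the task reduces to checking that the hypotheses of the proposition match those of the corollary.

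First I would record the observation that $\sigma$ and $\delta$ restrict trivially to $\field \subseteq A$: since $\sigma$ is an $\field$-algebra automorphism it fixes $\field$ pointwise, hence $\sigma|_\field = \operatorname{id}_\field$; and since $S = A[x;\sigma,\delta]$ is an $\field$-algebra, any scalar $c \in \field$ must satisfy $xc = cx$, which together with $xc = \sigma(c)x + \delta(c) = cx + \delta(c)$ forces $\delta|_\field = 0$. In particular $\sigma(\field) \subseteq \field$ and $\delta(\field) \subseteq \field$, so Proposition \ref{split} is indeed applicable.

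Next I would translate the hypotheses. The map $\xi: A \to \field$ is $\field$-linear with $\field$ central, hence automatically an $\field$-bimodule morphism, and $\xi(1) = 1$ is assumed. Because $\sigma$ and $\delta$ act on $\field$ as the identity and zero respectively, the equalities $\xi\sigma = \sigma\xi$ and $\xi\delta = \delta\xi$ required by Proposition \ref{split} simplify precisely to $\xi\sigma = \xi$ and $\xi\delta = 0$, which are exactly the assumptions of the corollary.

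Proposition \ref{split} then yields that $\field[x;\sigma|_\field,\delta|_\field] \subseteq A[x;\sigma,\delta]$ is split, and since $\sigma|_\field = \operatorname{id}$ and $\delta|_\field = 0$ the smaller ring is just $\field[x] = R$ while the larger one is $S$. No serious obstacle is expected; the only point requiring a moment of care is verifying that $\delta$ vanishes on the ground field, and this is forced by the Ore extension being an $\field$-algebra, as noted above.
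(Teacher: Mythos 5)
Your proposal is correct and takes essentially the same route as the paper: both reduce the corollary to Proposition \ref{split} by observing that \(\sigma_{|\field} = \operatorname{id}_{\field}\) and \(\delta_{|\field} = 0\), so that the conditions \(\xi\sigma = \sigma\xi\) and \(\xi\delta = \delta\xi\) collapse to \(\xi\sigma = \xi\) and \(\xi\delta = 0\). The only cosmetic difference is that you let \(\xi\) itself witness the splitness of \(\field \subseteq A\), whereas the paper notes splitness separately via an \(\field\)-basis of \(A\) containing \(1\); both observations are immediate.
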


\begin{proof}
Observe that any finite dimensional \(\field\)-algebra \(A\) is split, since there is an \(\field\)-basis of \(A\) containing the element \(1\).  Hence, the corollary follows from Proposition \ref{split}, since \(\sigma_{|\field} = \operatorname{id}_{\field}\) and \(\delta_{|\field} = 0\).
\end{proof}

The transfer of separability in Ore extensions is studied in \cite{Gomez/Lobillo/Navarro:2017a}. For brevity, we denote by $\sigma^\otimes$ and $\delta^\otimes$ the maps
\[
\begin{split}
\sigma^\otimes:B\otimes_C B &\to B\otimes_C B\\ 
b_1\otimes b_2 &\mapsto \sigma(b_1)\otimes \sigma(b_2)\\
& \\
\delta^\otimes:B\otimes_C B &\to B\otimes_C B\\ 
b_1\otimes b_2 &\mapsto \sigma(b_1)\otimes \delta(b_2)+\delta(b_1)\otimes b_2
\end{split}
\]
for every $b_1,b_2\in B$. By \cite[Lemma 27]{Gomez/Lobillo/Navarro:2017a}, $\sigma^\otimes$ and $\delta^\otimes$ are well defined.

We will use the following proposition whose easy proof compares $xp$ and $px$ in the light of the rule defining the product in an Ore extension.

\begin{proposition}[{\cite[Theorem 29]{Gomez/Lobillo/Navarro:2017a}}]\label{sep}
 If $C \subseteq B$ is separable and there exists a separability element $p$ verifying $\sigma^\otimes(p)=p$ and  $\delta^\otimes(p)=0$, then $C[x;\sigma,\delta] \subseteq B[x;\sigma,\delta]$ is separable. 
\end{proposition}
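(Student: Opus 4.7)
The plan is to verify that $p = \sum_j u_j \otimes v_j$, originally an element of $B \otimes_C B$, still works as a separability element after being pushed through the canonical map $B \otimes_C B \to S \otimes_R S$, where I write $S = B[x;\sigma,\delta]$ and $R = C[x;\sigma,\delta]$. This map is well defined because the inclusion $C \subseteq R$ extends the scalar ring of the tensor product. First I would observe that the multiplication condition $\mu_S(p) = \sum_j u_j v_j = 1$ is inherited for free from separability of $B$ over $C$, so the entire content of the proposition is the centrality of (the image of) $p$ in $S \otimes_R S$.

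Next I would reduce the centrality condition $fp = pf$ for all $f \in S$ to the single identity $xp = px$. The reduction uses two facts: the identities $bp = pb$ for $b \in B$ hold in $B \otimes_C B$ and so survive the canonical map; and $S$ is generated as a ring by $B$ together with $x$, so once $x$ commutes with $p$, every element of $S$ does too.

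Then I would compute $xp$ directly, using the Ore relation $xu = \sigma(u)x + \delta(u)$ twice together with the fact that $x$ now lies in the base ring of the tensor product. The first application gives
$$xp = \sum_j \sigma(u_j)\, x \otimes v_j + \sum_j \delta(u_j) \otimes v_j,$$
and since $x \in R$ the factor $x$ in the first sum may be slid across the tensor symbol; applying the Ore relation once more on the right yields
$$xp = \sum_j \sigma(u_j) \otimes \sigma(v_j)\, x + \sum_j \sigma(u_j) \otimes \delta(v_j) + \sum_j \delta(u_j) \otimes v_j = \sigma^{\otimes}(p)\, x + \delta^{\otimes}(p).$$
The two hypotheses $\sigma^{\otimes}(p) = p$ and $\delta^{\otimes}(p) = 0$ then collapse the right hand side to $p\, x = px$, completing the argument.

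The main obstacle is really just bookkeeping: one has to keep careful track of which tensor product one is in and recognise that the pivotal move, sliding $x$ across the tensor, is legitimate precisely because the base has been enlarged from $C$ to $R$. The hypotheses on $p$ are exactly designed so that the two ``error terms'' produced by the Ore relation vanish, which is why the proof is as direct as the remark preceding the statement suggests.
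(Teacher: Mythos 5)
Your proof is correct and follows exactly the route the paper indicates: the paper does not reproduce the argument (it cites \cite[Theorem 29]{Gomez/Lobillo/Navarro:2017a}) but describes it as an ``easy proof [that] compares $xp$ and $px$ in the light of the rule defining the product in an Ore extension,'' which is precisely your reduction of centrality to the single identity $xp=px$ and the computation $xp=\sigma^{\otimes}(p)\,x+\delta^{\otimes}(p)$. Your attention to the base change from $C$ to $R$ (which legitimises sliding $x$ across $\otimes_R$) and to the fact that $bp=pb$ and $\mu(p)=1$ survive the canonical map $B\otimes_C B\to S\otimes_R S$ covers all the needed bookkeeping.
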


In \cite[Theorem 8]{Gomez/Lobillo/Navarro:2017b} a converse to Proposition \ref{sep} is provided when \(\delta = 0\). Here, we generalize part of this result when \(\delta\) is an inner \(\sigma\)--derivation. So, for the rest of this section, \(\sigma : A \to A\) is an \(\field\)--linear automorphism and \(\delta_{\sigma,b} : A \to A\) is a $\sigma$-derivation defined by 
\[
\delta_{\sigma,b}(a) = ba - \sigma(a) b
\]
for some \(b \in A\). Hence \(R = \field[x]\) and \(S = A[x;\sigma,\delta_{\sigma,b}]\). Recall that we have fixed an \(\field\)--basis \(\{a_1, \dots, a_r\}\) of \(A\). 

\begin{lemma}\label{gradingtensor}
The set \(\{a_i \otimes_R a_j x^k ~|~ 1 \leq i,j \leq r, k \geq 0\}\) is an \(\field\)--basis of \(S \otimes_R S\). Consequently, the map
\[
\varphi : S \otimes_R S \to \bigoplus_{k \geq 0} (A \otimes_\field A) x^k, \quad \left[a_i \otimes_R a_j x^k \mapsto (a_i \otimes_\field a_j) x^k \right]
\]
is an \(\field\)--isomorphism that provides an \(\mathbb{N}\)--grading on \(S \otimes_R S\) as an \(\field\)--vector space. 
\end{lemma}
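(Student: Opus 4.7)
The plan is to reduce the structure of $S \otimes_R S$ as an $\field$-vector space to that of $S$ itself, by using Lemma \ref{lemma 1a}(1) to split the first tensor factor into a direct sum of rank-one free right $R$-modules. Once that is done, one only needs an explicit $\field$-basis of the second factor $S$ to read off a basis of the tensor product, and the definition of $\varphi$ and the grading become bookkeeping.

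Concretely, the first step is to invoke Lemma \ref{lemma 1a}(1) to write $S = \bigoplus_{i=1}^r a_i R$ as right $R$-modules. This gives
\[
S \otimes_R S = \bigoplus_{i=1}^r (a_i R) \otimes_R S,
\]
and for each $i$ the map $(a_i R) \otimes_R S \to S$ given by $a_i r \otimes_R s \mapsto rs$ is an $\field$-linear isomorphism. Consequently, for any $\field$-basis $\{s_\lambda\}$ of $S$, the set $\{a_i \otimes_R s_\lambda\}$ is automatically an $\field$-basis of $S \otimes_R S$.

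The second step is to pick the $\field$-basis of $S$ dictated by the statement. Combining the right $R$-module decomposition $S = \bigoplus_{j=1}^r a_j R$ with the $\field$-basis $\{x^k : k \geq 0\}$ of $R = \field[x]$, the family $\{a_j x^k : 1 \leq j \leq r,\, k \geq 0\}$ is an $\field$-basis of $S$. Plugging this into the previous step yields the desired $\field$-basis $\{a_i \otimes_R a_j x^k\}$ of $S \otimes_R S$, which proves the first assertion.

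For the second assertion, I would define $\varphi$ on this basis by the prescribed formula and extend $\field$-linearly. The target $\bigoplus_{k \geq 0} (A \otimes_\field A) x^k$ carries the matching $\field$-basis $\{(a_i \otimes_\field a_j) x^k\}$, so $\varphi$ is an $\field$-isomorphism, and the homogeneous components $\varphi^{-1}((A \otimes_\field A) x^k)$ supply the advertised $\mathbb{N}$-grading. No serious obstacle is foreseen; the only mildly subtle point is to observe that, although $x$ does not commute with elements of $A$ inside $S$, the expression $a_j x^k$ places $x^k$ to the right of $a_j$, so it already belongs to the right $R$-module decomposition without any need to invoke the skew multiplication rule $xa = \sigma(a)x + \delta(a)$.
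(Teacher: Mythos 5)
Your proof is correct and takes essentially the same route as the paper: both exploit Lemma \ref{lemma 1a}(1) to present \(S \otimes_R S\) as free with basis \(\{a_i \otimes_R a_j\}\) over \(R\) acting on the right-hand factor, and then expand along the \(\field\)-basis \(\{x^k\}\) of \(R = \field[x]\). The only difference is cosmetic: where the paper cites \cite[Corollary 8.5]{Stenstrom:1975} for the freeness of the tensor product, you carry out the rank-one direct-sum computation \(S \otimes_R S \cong \bigoplus_i (a_i R) \otimes_R S \cong \bigoplus_i S\) explicitly, which is a perfectly valid (and self-contained) substitute.
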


\begin{proof}
It can be derived from Lemma \ref{lemma 1a} and \cite[Corollary 8.5]{Stenstrom:1975} that \(S \otimes_R S\) is a free right \(R\)-module with basis \(\{a_i \otimes_R a_j ~|~ 1 \leq i,j \leq r\}\), hence \(\{a_i \otimes_R a_j x^k ~|~ 1 \leq i,j \leq r, k \geq 0\}\) is an \(\field\)--basis. Consequently \(\varphi\) is an isomorphism because \(\{(a_i \otimes_\field a_j) x^k ~|~ 1 \leq i,j \leq r, k \geq 0\}\) is a basis of \(\bigoplus_{k \geq 0} (A \otimes_\field A) x^k\). 
\end{proof}

\begin{proposition}\label{sepdown}
If \(R \subseteq S\) is separable and \(\delta = \delta_{\sigma,b}\) is inner, then \(\field \subseteq A\) is separable. 
\end{proposition}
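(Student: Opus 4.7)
The plan is to start from a separability element \(p \in S \otimes_R S\) and modify it into a separability element for \(\field \subseteq A\), exploiting the inner nature of \(\delta\). By Lemma~\ref{gradingtensor}, I would decompose \(\varphi(p) = \sum_{k=0}^n p_k x^k\) with each \(p_k \in A \otimes_\field A\). Since \(\{x^k\}_{k \geq 0}\) is a left \(A\)--basis of \(S\), applying the multiplication map to \(\mu(p) = 1\) and comparing \(x\)-degrees gives \(\mu(p_0) = 1\) and \(\mu(p_k) = 0\) for \(k \geq 1\). Next, for each \(a \in A\), I would expand both sides of \(ap = pa\) using the commutation rule \(x^k a = \sum_{\ell=0}^k N_\ell^k(a) x^\ell\) from \eqref{N} and compare graded components through \(\varphi\) to obtain
\begin{equation}\label{planA}
a \cdot p_\ell = \sum_{k \geq \ell} p_k \cdot N_\ell^k(a) \qquad \text{in } A \otimes_\field A,
\end{equation}
for every \(\ell \geq 0\), where \(a \cdot\) denotes the left \(A\)--action on the first tensor factor and \(\cdot c\) the right \(A\)--action on the second.

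The next ingredient is an evaluation homomorphism. Because \(\delta = \delta_{\sigma,b}\) is inner, the assignment \(x \mapsto b\) extends to an \(\field\)--algebra homomorphism \(\phi : S \to A\) which restricts to the identity on \(A\); indeed \(\phi(xa) = ba = \sigma(a) b + (ba - \sigma(a) b) = \phi(\sigma(a) x + \delta(a))\). Applying \(\phi\) to \(x^k a = \sum_\ell N_\ell^k(a) x^\ell\) then yields the evaluation identity
\begin{equation}\label{planB}
b^k a = \sum_{\ell=0}^k N_\ell^k(a) \, b^\ell \qquad \text{in } A.
\end{equation}

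To conclude, I would set \(q = \sum_{k=0}^n p_k \cdot b^k \in A \otimes_\field A\). From the first step, \(\mu(q) = \mu(p_0) = 1\). For any \(a \in A\), successive use of \eqref{planA}, a switch in the order of summation, and then \eqref{planB} gives
\[
a \cdot q = \sum_k (a \cdot p_k) \cdot b^k = \sum_k \sum_{j \geq k} p_j \cdot N_k^j(a) b^k = \sum_j p_j \cdot \Bigl( \sum_{k \leq j} N_k^j(a) b^k \Bigr) = \sum_j p_j \cdot b^j a = q \cdot a,
\]
so \(q\) is a separability element for \(\field \subseteq A\). The main obstacle I anticipate is guessing the correction \(q\); the key insight is that the inner derivation hypothesis is precisely what produces the evaluation homomorphism \(\phi\), and relation \eqref{planB} is the algebraic shadow that makes the right-hand side of \eqref{planA} telescope into a clean right action by powers of \(b\).
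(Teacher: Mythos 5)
Your proposal is correct and matches the paper's proof in all essentials: the same grading of \(S \otimes_R S\) via Lemma~\ref{gradingtensor}, the same graded relations extracted from \(ap = pa\) and \(\mu(p) = 1\), and the same correction element (your \(q = \sum_k p_k \cdot b^k\) is exactly the paper's \(\widehat{p} = \sum_{i,j} a_i \otimes_\field g_{ij} b^j\)). The only deviation is cosmetic: where the paper proves \(\sum_{\ell=0}^{j} N_\ell^j(a)\, b^\ell = b^j a\) by induction on \(j\) using \eqref{Nnmas1}, you derive it by applying the evaluation homomorphism \(\phi \colon S \to A\), \(x \mapsto b\) (well defined precisely because \(\delta\) is inner), to \(x^j a = \sum_{\ell} N_\ell^j(a) x^\ell\) --- a slightly slicker packaging of the same computation.
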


\begin{proof}
Let \(p \in S \otimes_R S\) be a separability element. We do not lose generality if we assume \(p = \sum_{i=1}^r \sum_{j=0}^m a_i \otimes_R g_{ij} x^j\). Let \(a \in A\), where $\{a_1, \dots, a_r \}$ is an $\field$--basis of $A$.  Since \(ap = pa\) we have
\[
\begin{split}
\sum_{i=1}^r \sum_{j=0}^m aa_i \otimes_R g_{ij} x^j & = \sum_{i=1}^r \sum_{j=0}^m \sum_{k=0}^j a_i \otimes_R g_{ij} N_k^j(a) x^k \\
& = \sum_{i=1}^r \sum_{k=0}^m \sum_{j=k}^m a_i \otimes_R g_{ij} N_k^j(a) x^k.
\end{split}
\]
By Lemma \ref{gradingtensor} and by applying \(\varphi\), we get that, for all \(0 \leq \ell \leq m\),
\[
\sum_{i=1}^r aa_i \otimes_\field g_{i \ell} = \sum_{i=1}^r \sum_{j=\ell}^m a_i \otimes_\field g_{ij} N_\ell^j(a) \in A \otimes_\field A.
\]
Multiplying on the right by \(b^\ell\) and adding all the obtained identities we have
\[
\begin{split}
\sum_{\ell=0}^m \sum_{i=1}^r aa_i \otimes_\field g_{i \ell} b^\ell & = \sum_{\ell=0}^m \sum_{i=1}^r \sum_{j=\ell}^m a_i \otimes_\field g_{ij} N_\ell^j(a) b^\ell \\
& = \sum_{i=1}^r \sum_{j=0}^m \sum_{\ell=0}^j a_i \otimes_\field g_{ij} N_\ell^j(a) b^\ell. 
\end{split}
\]
Since \(ba = \sigma(a) b + \delta_{\sigma,b}(a)\), an inductive argument on $j$, which uses \eqref{Nnmas1}, shows that 
\[
\sum_{\ell=0}^j N_\ell^j(a) b^\ell = b^j a, 
\]
hence
\[
\sum_{i=1}^r \sum_{\ell=0}^m aa_i \otimes_\field g_{i \ell} b^\ell = \sum_{i=1}^r \sum_{j=0}^m a_i \otimes_\field g_{ij} b^j a.
\]
So \(\widehat{p} = \sum_{i=1}^r \sum_{j=0}^m a_i \otimes_\field g_{ij} b^j\) satisfies \(a \widehat{p} = \widehat{p} a\) for all \(a \in A\).  Now, since 
\[ 
1 = \mu(p) = \sum_{i=1}^r \sum_{j=0}^m a_{i} g_{ij} x^j \in A[x;\sigma,\delta_{\sigma,b}],
\]
it follows that \(1 = \sum_{i=1}^r a_i g_{i0}\) and \(0 = \sum_{i=1}^r a_i g_{ij}\) for all \(1 \leq j \leq m\). Therefore \(\mu(\widehat{p}) = 1\) and \(\widehat{p}\) is a separability element for \(\field \subseteq A\). 
\end{proof}

\section{An answer to a problem of Caenepeel and Kadison}\label{sec:Example}

In this section, with the aid of the results of Sections \ref{Frobenius} and \ref{Biseparable}, we give a negative answer to Problem \ref{theproblem}.

\begin{example}[Answer to Problem \ref{theproblem}]\label{counterexample}
Let $\field_8$ be the field with eight elements described as $\field_8=\field_2(a)$, where $a^3+a^2+1=0$. Let $\tau$ be the Frobenius automorphism on $\field_{8}$, that is, $\tau(c)=c^2$ for every $c\in \field_8$.  Observe that $\{a, a^2,a^4\}$ is an auto dual basis of the extension $\field_2 \subseteq \field_8$.  Set $A=\mathcal{M}_2(\field_8)$, the ring of $2\times 2$ matrices over $\field_8$, and consider the $\field_2$-automorphism $\sigma: A \to A$ defined as the component-by-component extension of $\tau$ to $A$. That is, $\sigma$ is given by
\begin{equation}\label{automorphism}
\sigma\left ( \begin{matrix} x_0 & x_1 \\ x_2 & x_3 \end{matrix} \right ) = \left ( \begin{matrix} \tau(x_0) & \tau(x_1) \\ \tau(x_2) & \tau(x_3) \end{matrix} \right ) = \left ( \begin{matrix} x_0^2 & x_1^2 \\ x_2^2 & x_3^2 \end{matrix} \right ) \text{ for every } \left ( \begin{matrix} x_0 & x_1 \\ x_2 & x_3 \end{matrix} \right )\in A.
\end{equation}
We can also set the inner $\sigma$-derivation $\delta:A\rightarrow A$ given by $\delta(X)=MX-\sigma(X)M$ for $X\in A$, where 
\[
M=\begin{pmatrix} 
0 & 0 \\
0 & a 
\end{pmatrix}.
\]
Our aim is to prove that the ring extension $\field_2[x] \subseteq A[x;\sigma, \delta]$ is split and separable, and hence biseparable, but not Frobenius. For simplicity, we denote
\[
e_0= \begin{pmatrix} 
1 & 0 \\
0 & 0 
\end{pmatrix}, \, e_1= \begin{pmatrix} 
0 & 1 \\
0 & 0 
\end{pmatrix}, \, e_2= \begin{pmatrix} 
0& 0 \\
1 & 0 
\end{pmatrix} \text{ and } e_3= \begin{pmatrix} 
0& 0 \\
0 & 1 
\end{pmatrix}.
\]
Hence, an $\field_2$-basis of $A$ is given by $\mathcal{B}=\{a^{2^i}e_j \text{ such that } 0\leq i \leq 2 \text{ and } 0\leq j \leq 3\}$. 

Let $\varepsilon: A \to \field_2$ be an $\field_2$-linear map. If we force $\varepsilon \sigma = \varepsilon$, then 
\[
\varepsilon(a^{2^{i+1}}e_j)=\varepsilon\sigma(a^{2^i}e_j)=\varepsilon(a^{2^{i}}e_j)
\]
for every $0 \leq i \leq 2$ and \(0 \leq j \leq 3\), so that  $\varepsilon$ is determined by four values $\gamma_0,\gamma_1,\gamma_2, \gamma_3\in \field_2$ such that $\varepsilon(a^{2^i}e_j)=\gamma_j$ for  $0 \leq i \leq 2$ and \(0 \leq j \leq 3\).

Let us then consider $\xi:A\rightarrow \field_2$ the $\field_2$-linear map determined by $\gamma_0=1$,  $\gamma_1=0$, $\gamma_2=0$ and $\gamma_3=0$. Firstly,
\[
\begin{split}
\xi\begin{pmatrix} 
1& 0 \\
0 & 1 
\end{pmatrix}
& =\xi\begin{pmatrix} 
a+a^2+a^4& 0 \\
0 & a+a^2+a^4 
\end{pmatrix}\\
& =\xi(ae_0)+\xi(a^2e_0)+\xi(a^4e_0)+\xi(ae_3)+\xi(a^2e_3)+\xi(a^4e_3)\\
& =1.
\end{split}
\]
On the other hand, for any $x_0,x_1,x_2,x_3 \in \field_8$,
\begin{equation}\label{derivation}
\begin{split}
\delta\begin{pmatrix} 
x_0 & x_1 \\
x_2 & x_3 
\end{pmatrix} &= \begin{pmatrix} 
0 & 0 \\
0 & a 
\end{pmatrix} \begin{pmatrix} 
x_0 & x_1 \\
x_2 & x_3 
\end{pmatrix} + \begin{pmatrix} 
x_0^2 & x_1^2 \\
x_2^2 & x_3^2 
\end{pmatrix}\begin{pmatrix} 
0 & 0 \\
0 & a 
\end{pmatrix}\\ 
&= \begin{pmatrix} 
0 & 0 \\
ax_2 & ax_3 
\end{pmatrix} + \begin{pmatrix} 
0 & ax_1^2 \\
0 & ax_3^2 
\end{pmatrix}\\
&= \begin{pmatrix} 
0 & ax_1^2  \\
ax_2 & a(x_3+x_3^2). 
\end{pmatrix}
\end{split}
\end{equation}
Therefore, $\xi\delta = 0$. By Corollary \ref{corsplit}, the extension $\field_2[x] \subseteq A[x;\sigma, \delta]$ is split.

Let us prove that the map $\xi$ is the only non trivial $\field_2$-linear map verifying the equalities $\xi\sigma=\xi$ and $\xi\delta=0$. Let us suppose that $\varepsilon:A\rightarrow \field_2$ is a non zero $\field_2$-linear map that verifies the equation $\varepsilon\sigma=\varepsilon$. As reasoned above, it is determined by  some values  $\gamma_0,\gamma_1,\gamma_2, \gamma_3\in \field_2$. Nevertheless,
\begin{itemize}
\item If $\gamma_1=1$, then
$\varepsilon\delta\begin{pmatrix} 
0& 1 \\
0 & 0 
\end{pmatrix} = \varepsilon \begin{pmatrix} 
0& a \\
0 & 0 
\end{pmatrix}=1$,
\item If $\gamma_2=1$, then
$\varepsilon\delta\begin{pmatrix} 
0& 0 \\
1 & 0 
\end{pmatrix} = \varepsilon \begin{pmatrix} 
0& 0 \\
a & 0 
\end{pmatrix}=1$,
\item If $\gamma_3=1$, then
$\varepsilon\delta\begin{pmatrix} 
0& 0 \\
0 & a 
\end{pmatrix} = \varepsilon \begin{pmatrix} 
0& 0 \\
0 & a^2 + a^3
\end{pmatrix}= \varepsilon \begin{pmatrix} 
0& 0 \\
0 & a+a^2+a^4 
\end{pmatrix}=1$,
\end{itemize}
so that $\varepsilon\delta=0$ implies $\gamma_1=\gamma_2=\gamma_3=0$. Hence, $\gamma_0 = 1$, and $\varepsilon = \xi$.  Note that the kernel of $\xi$ contains the left ideal 
\[
J=\left\{\begin{pmatrix} 
0& c_2\\
0 & c_3
\end{pmatrix} \mid c_2,c_3\in \field_8 \right\},
\]
so that there is no Frobenius functional $\varepsilon:A\to \field_2$ verifying $\varepsilon\sigma=\varepsilon$ and $\varepsilon\delta=0$. By Corollary \ref{Frobext}, the extension $\field_2[x] \subseteq A[x;\sigma, \delta]$ is not Frobenius.

Finally, let us prove that the extension is separable. Consider the element $p\in A\otimes_{\field_2}A$ given by
\[
\begin{split}
p &= \begin{pmatrix} 
a& 0 \\
0 & 0 
\end{pmatrix} \otimes \begin{pmatrix} 
a& 0 \\
0 & 0 
\end{pmatrix} + \begin{pmatrix} 
a^2& 0 \\
0 & 0 
\end{pmatrix} \otimes \begin{pmatrix} 
a^2& 0 \\
0 & 0 
\end{pmatrix} + \begin{pmatrix} 
a^4& 0 \\
0 & 0 
\end{pmatrix} \otimes \begin{pmatrix} 
a^4& 0 \\
0 & 0 
\end{pmatrix} \\
&\quad + \begin{pmatrix} 
0& 0 \\
a & 0 
\end{pmatrix} \otimes \begin{pmatrix} 
0& a\\
0 & 0 
\end{pmatrix} + \begin{pmatrix} 
0& 0 \\
a^2 & 0 
\end{pmatrix} \otimes \begin{pmatrix} 
0& a^2 \\
0 & 0 
\end{pmatrix} + \begin{pmatrix} 
0& 0 \\
a^4 & 0 
\end{pmatrix}\otimes \begin{pmatrix} 
0& a^4\\
0 & 0 
\end{pmatrix}.
\end{split}
\]
This is a separability element of the extension $\field_2 \subseteq A$, since it is the composition of the separability element $a \otimes a + a^2 \otimes a^2 + a^4 \otimes a^4$ of the extension $\field_2 \subseteq \field_8$, and the separability element $e_0 \otimes e_0 + e_2 \otimes e_3$ of the extension $\field_8 \subseteq A$, see \cite[Examples 4 and 5]{Gomez/Lobillo/Navarro:2017a} and \cite[Proposition 2.5]{Hirata/Sugano:1966}. Although it is straightforward to check that $\sigma^{\otimes}(p)=p$ and $\delta^{\otimes}(p)=0$, due to its importance in this paper, we detail explicitly all the computations. Since the Frobenius automorphism induces a permutation on \(\{a,a^2,a^4\}\), it follows that
\[
\begin{split}
\sigma^\otimes(p) &=\begin{pmatrix} 
a^2 & 0 \\
0 & 0 
\end{pmatrix} \otimes \begin{pmatrix} 
a^2 & 0 \\
0 & 0 
\end{pmatrix} + \begin{pmatrix} 
a^4 & 0 \\
0 & 0 
\end{pmatrix} \otimes \begin{pmatrix} 
a^4 & 0 \\
0 & 0 
\end{pmatrix} + \begin{pmatrix} 
a & 0 \\
0 & 0 
\end{pmatrix} \otimes \begin{pmatrix} 
a & 0 \\
0 & 0 
\end{pmatrix} \\
&\quad + \begin{pmatrix} 
0 & 0 \\
a^2 & 0 
\end{pmatrix} \otimes \begin{pmatrix} 
0 & a^2 \\
0 & 0 
\end{pmatrix} + \begin{pmatrix} 
0 & 0 \\
a^4 & 0 
\end{pmatrix} \otimes \begin{pmatrix} 
0 & a^4 \\
0 & 0 
\end{pmatrix} + \begin{pmatrix} 
0 & 0 \\
a & 0 
\end{pmatrix}\otimes \begin{pmatrix} 
0 & a\\
0 & 0 
\end{pmatrix} \\
&= p.
\end{split}
\]
Let us now compute \(\delta^\otimes(p)\). Recall \(\delta^\otimes = \sigma \otimes \delta + \delta \otimes \operatorname{id}\). By \eqref{derivation} and \eqref{automorphism}, \(\delta \left( \begin{smallmatrix} c & 0 \\ 0 & 0 \end{smallmatrix} \right) = \left( \begin{smallmatrix} 0 & 0 \\ 0 & 0 \end{smallmatrix} \right)\) for each \(c \in \field_8\), so
\[
\delta^\otimes \left(\begin{pmatrix} 
a^{2^i} & 0 \\
0 & 0 
\end{pmatrix} \otimes \begin{pmatrix} 
a^{2^i} & 0 \\
0 & 0 
\end{pmatrix}\right) = \begin{pmatrix} 
a^{2^{i+1}} & 0 \\
0 & 0 
\end{pmatrix} \otimes \begin{pmatrix} 
0 & 0 \\
0 & 0 
\end{pmatrix} + \begin{pmatrix} 
0 & 0 \\
0 & 0 
\end{pmatrix} \otimes \begin{pmatrix} 
a^{2^i} & 0 \\
0 & 0 
\end{pmatrix},
\]
for \(0 \leq i \leq 2\). Hence
\begin{equation}\label{deltafirstreduction}
\begin{split}
\delta^\otimes(p) &= \delta^\otimes \left(\begin{pmatrix} 
a& 0 \\
0 & 0 
\end{pmatrix} \otimes \begin{pmatrix} 
a& 0 \\
0 & 0 
\end{pmatrix}\right) 
+ \delta^\otimes \left(\begin{pmatrix} 
a^2& 0 \\
0 & 0 
\end{pmatrix} \otimes \begin{pmatrix} 
a^2& 0 \\
0 & 0 
\end{pmatrix}\right) \\
&\quad + \delta^\otimes \left(\begin{pmatrix} 
a^4& 0 \\
0 & 0 
\end{pmatrix} \otimes \begin{pmatrix} 
a^4& 0 \\
0 & 0 
\end{pmatrix}\right) 
+ \delta^\otimes \left(\begin{pmatrix} 
0& 0 \\
a & 0 
\end{pmatrix} \otimes \begin{pmatrix} 
0& a\\
0 & 0 
\end{pmatrix}\right) \\
&\quad + \delta^\otimes \left(\begin{pmatrix} 
0& 0 \\
a^2 & 0 
\end{pmatrix} \otimes \begin{pmatrix} 
0& a^2 \\
0 & 0 
\end{pmatrix}\right) 
+ \delta^\otimes \left(\begin{pmatrix} 
0& 0 \\
a^4 & 0 
\end{pmatrix}\otimes \begin{pmatrix} 
0& a^4\\
0 & 0 
\end{pmatrix}\right) \\
&= \delta^\otimes \left(\begin{pmatrix} 
0& 0 \\
a & 0 
\end{pmatrix} \otimes \begin{pmatrix} 
0& a\\
0 & 0 
\end{pmatrix}\right) \\
&\quad + \delta^\otimes \left(\begin{pmatrix} 
0& 0 \\
a^2 & 0 
\end{pmatrix} \otimes \begin{pmatrix} 
0& a^2 \\
0 & 0 
\end{pmatrix}\right) 
+ \delta^\otimes \left(\begin{pmatrix} 
0& 0 \\
a^4 & 0 
\end{pmatrix}\otimes \begin{pmatrix} 
0& a^4\\
0 & 0 
\end{pmatrix}\right)
\end{split}
\end{equation}
Moreover, by \eqref{derivation} and \eqref{automorphism} again,
\[
\delta^\otimes \left(\begin{pmatrix} 
0 & 0 \\
a^{2^i} & 0 
\end{pmatrix} \otimes \begin{pmatrix} 
0 & a^{2^i} \\
0 & 0 
\end{pmatrix}\right) = \begin{pmatrix} 
0 & 0 \\
a^{2^{i+1}} & 0 
\end{pmatrix} \otimes \begin{pmatrix} 
0 & a^{2^{i+1}+1} \\
0 & 0 
\end{pmatrix} + \begin{pmatrix} 
0 & 0 \\
a^{2^i+1} & 0 
\end{pmatrix} \otimes \begin{pmatrix} 
0 & a^{2^i} \\
0 & 0 
\end{pmatrix},
\]
so we can follow the computations in \eqref{deltafirstreduction} to get 
\begin{equation}\label{deltasecondreduction}
\begin{split}
\delta^\otimes(p) &= \begin{pmatrix} 
0 & 0 \\
a^{2} & 0 
\end{pmatrix} \otimes \begin{pmatrix} 
0 & a^{3} \\
0 & 0 
\end{pmatrix} + \begin{pmatrix} 
0 & 0 \\
a^{2} & 0 
\end{pmatrix} \otimes \begin{pmatrix} 
0 & a \\
0 & 0 
\end{pmatrix} \\
&\quad + \begin{pmatrix} 
0 & 0 \\
a^{4} & 0 
\end{pmatrix} \otimes \begin{pmatrix} 
0 & a^{5} \\
0 & 0 
\end{pmatrix} + \begin{pmatrix} 
0 & 0 \\
a^{3} & 0 
\end{pmatrix} \otimes \begin{pmatrix} 
0 & a^{2} \\
0 & 0 
\end{pmatrix} \\
&\quad + \begin{pmatrix} 
0 & 0 \\
a & 0 
\end{pmatrix} \otimes \begin{pmatrix} 
0 & a^{2} \\
0 & 0 
\end{pmatrix} + \begin{pmatrix} 
0 & 0 \\
a^{5} & 0 
\end{pmatrix} \otimes \begin{pmatrix} 
0 & a^{4} \\
0 & 0 
\end{pmatrix},
\end{split}
\end{equation}
where we have used that \(a^7 = 1\). The identities $a^3 = a + a^4$ and $a^5 = a^2 + a^4$ in \(\field_8\)
allow us to expand \eqref{deltasecondreduction} in order obtain
\begin{equation}\label{deltathirdreduction}
\begin{split}
\delta^\otimes(p) &= \begin{pmatrix} 
0 & 0 \\
a^{2} & 0 
\end{pmatrix} \otimes \begin{pmatrix} 
0 & a + a^4 \\
0 & 0 
\end{pmatrix} + \begin{pmatrix} 
0 & 0 \\
a^{2} & 0 
\end{pmatrix} \otimes \begin{pmatrix} 
0 & a \\
0 & 0 
\end{pmatrix} \\
&\quad + \begin{pmatrix} 
0 & 0 \\
a^{4} & 0 
\end{pmatrix} \otimes \begin{pmatrix} 
0 & a^2 + a^4 \\
0 & 0 
\end{pmatrix} + \begin{pmatrix} 
0 & 0 \\
a + a^4 & 0 
\end{pmatrix} \otimes \begin{pmatrix} 
0 & a^{2} \\
0 & 0 
\end{pmatrix} \\
&\quad + \begin{pmatrix} 
0 & 0 \\
a & 0 
\end{pmatrix} \otimes \begin{pmatrix} 
0 & a^{2} \\
0 & 0 
\end{pmatrix} + \begin{pmatrix} 
0 & 0 \\
a^2 + a^4 & 0 
\end{pmatrix} \otimes \begin{pmatrix} 
0 & a^{4} \\
0 & 0 
\end{pmatrix} \\
&= \begin{pmatrix} 
0 & 0 \\
a^{2} & 0 
\end{pmatrix} \otimes \begin{pmatrix} 
0 & a \\
0 & 0 
\end{pmatrix} 
+ \begin{pmatrix} 
0 & 0 \\
a^{2} & 0 
\end{pmatrix} \otimes \begin{pmatrix} 
0 & a^4 \\
0 & 0 
\end{pmatrix} \\
&\quad + \begin{pmatrix} 
0 & 0 \\
a^{2} & 0 
\end{pmatrix} \otimes \begin{pmatrix} 
0 & a \\
0 & 0 
\end{pmatrix} 
+ \begin{pmatrix} 
0 & 0 \\
a^{4} & 0 
\end{pmatrix} \otimes \begin{pmatrix} 
0 & a^2 \\
0 & 0 
\end{pmatrix} \\
&\quad + \begin{pmatrix} 
0 & 0 \\
a^{4} & 0 
\end{pmatrix} \otimes \begin{pmatrix} 
0 & a^4 \\
0 & 0 
\end{pmatrix} 
+ \begin{pmatrix} 
0 & 0 \\
a & 0 
\end{pmatrix} \otimes \begin{pmatrix} 
0 & a^{2} \\
0 & 0 
\end{pmatrix} \\
&\quad + \begin{pmatrix} 
0 & 0 \\
a^4 & 0 
\end{pmatrix} \otimes \begin{pmatrix} 
0 & a^{2} \\
0 & 0 
\end{pmatrix}
+ \begin{pmatrix} 
0 & 0 \\
a & 0 
\end{pmatrix} \otimes \begin{pmatrix} 
0 & a^{2} \\
0 & 0 
\end{pmatrix}  \\
&\quad + \begin{pmatrix} 
0 & 0 \\
a^2 & 0 
\end{pmatrix} \otimes \begin{pmatrix} 
0 & a^{4} \\
0 & 0 
\end{pmatrix}
+ \begin{pmatrix} 
0 & 0 \\
a^4 & 0 
\end{pmatrix} \otimes \begin{pmatrix} 
0 & a^{4} \\
0 & 0 
\end{pmatrix}\\
& = 0.
\end{split}
\end{equation}
By Proposition \ref{sep}, $\field_2[x] \subseteq A[x;\sigma, \delta]$ is separable. Hence $\field_2[x] \subseteq A[x;\sigma, \delta]$ is a biseparable extension which is not Frobenius.
\end{example}

At this point one could ask what happens if we replace the family of Frobenius extensions in Problem \ref{theproblem} by a more general family. For instance, we can consider the family of Frobenius extensions of second kind introduced in \cite{Nakayama/Tsuzuku:1960}. Let $C \subseteq B$ be a ring extension and let $\kappa: C \rightarrow C$ be an automorphism. There is a structure of left \(C\)-module on \(C\) given by $a \cdot_{\kappa} b = \kappa(a)b$ for each $a,b \in C$. Hence, $C \subseteq B$ is said to be a $\kappa$-Frobenius extension, or a Frobenius extension of second kind, if  $B$ is a finitely generated projective right $C$-module, and there exists a \(C-B\)-isomorphism from $B$ to $B^{*_\kappa}=\operatorname{Hom}(B_C, {_\kappa}C_C)$. The \(C-B\)-bimodule structure on $B^{*_\kappa}$ is then given by $(afb)(c)=a\cdot_{\kappa}f(bc)=\kappa(a)f(bc)$ for any $f\in B^{*_\kappa}$, $a\in C$ and $b,c\in B$. It is clear that a Frobenius extension of second kind is left and right semi Frobenius. A natural question that arises is then if a biseparable extension is a Frobenius extension of second kind. In order to answer this question, we may prove similar results to those showed in the previous sections. 

\begin{proposition}\label{prop second kind}
Let $\kappa:R\to R$ be an automorphism with $\kappa(x)=mx+n$ for some $m,n\in \field$ with $m\not =0$. There exists a bijection between the sets of
\begin{enumerate}
\item \(R-S\)-isomorphisms $\alpha:S\to S^{*_\kappa}$.
\item Frobenius functionals $\varepsilon:A\to \field$ verifying $\varepsilon\sigma=m\varepsilon$ and $\varepsilon\delta=n\varepsilon$.
\end{enumerate}
\end{proposition}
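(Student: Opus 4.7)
The strategy is to parallel the proof of Theorem \ref{bijectionFrob}, keeping careful track of the twist $\kappa$. The key observation is that the right $S$-module structures on $S^{*_\kappa}$ and $S^*$ agree, because the right action $(\chi b)(u) = \chi(bu)$ does not involve $\kappa$; only the left $R$-structure $(r \cdot \chi)(u) = \kappa(r)\chi(u)$ on $S^{*_\kappa}$ differs from that on $S^*$. Hence right $S$-isomorphisms $S \to S^{*_\kappa}$ and right $S$-isomorphisms $S \to S^*$ coincide as sets, and by Theorem \ref{semiFrobext} they are already in bijection with Frobenius functionals $\varepsilon : A \to \field$, through $\varepsilon \leftrightarrow \alpha_\varepsilon$ given by $\alpha_\varepsilon(1)(\sum_i f_i x^i) = \sum_i \varepsilon(f_i) x^i$.

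Under this correspondence, the remaining task is to characterize when $\alpha_\varepsilon$ is additionally left $R$-linear with respect to the twisted action. The automorphism $\kappa$ restricts to the identity on $\field$ (which we take as understood, since $\kappa(x) = mx + n$ with scalars $m, n \in \field$), so $\field$-linearity of $\alpha_\varepsilon$ already ensures compatibility with constants. By right $S$-linearity of $\alpha_\varepsilon$, the condition $\alpha_\varepsilon(xf) = x \cdot \alpha_\varepsilon(f)$ for all $f \in S$ collapses to the single identity $\alpha_\varepsilon(x) = x \cdot \alpha_\varepsilon(1)$ in $S^{*_\kappa}$. Since both sides are right $R$-linear maps $S \to R$, testing them on an arbitrary $a \in A$ suffices, in view of Lemma \ref{lemma 1a}.

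The computation then mirrors that of Theorem \ref{bijectionFrob}. Unravelling the definition of $\alpha_\varepsilon$ and using the Ore relation $xa = \sigma(a)x + \delta(a)$, one finds
\[
\alpha_\varepsilon(x)(a) = \alpha_\varepsilon(xa)(1) = \varepsilon(\sigma(a))\,x + \varepsilon(\delta(a)),
\]
while the twisted left action produces
\[
(x \cdot \alpha_\varepsilon(1))(a) = \kappa(x)\,\alpha_\varepsilon(1)(a) = (mx + n)\,\varepsilon(a) = m\varepsilon(a)\,x + n\varepsilon(a).
\]
Equating polynomial coefficients in $R$ yields exactly $\varepsilon\sigma = m\varepsilon$ and $\varepsilon\delta = n\varepsilon$, and conversely these two equations make the identity hold. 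This establishes the desired bijection.

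The proof presents no substantial obstacle beyond the careful bookkeeping of the twist: distinguishing the twisted left $R$-action on $S^{*_\kappa}$ from the untwisted one on $S^*$, and then matching the two coefficients of $\kappa(x) = mx + n$ with the two derivation-like conditions on $\varepsilon$. The heavy lifting is already done by Theorem \ref{semiFrobext}, which handles the right $S$-module side in one shot.
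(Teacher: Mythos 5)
Your proof is correct and takes essentially the same route as the paper: both identify $S^{*_\kappa}$ with $S^*$ as right $S$-modules so that Theorem \ref{semiFrobext} yields the bijection with Frobenius functionals, and then characterize twisted left $R$-linearity by comparing $\alpha_\varepsilon(x)(a)=\varepsilon(\sigma(a))x+\varepsilon(\delta(a))$ with $\kappa(x)\,\varepsilon(a)=m\varepsilon(a)x+n\varepsilon(a)$ coefficientwise. The only difference is that you make explicit some steps the paper leaves implicit, namely that $\kappa$ fixes $\field$, the reduction of left $R$-linearity to the single identity $\alpha_\varepsilon(x)=x\cdot\alpha_\varepsilon(1)$, and that testing on $a\in A$ suffices by Lemma \ref{lemma 1a}.
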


\begin{proof}
By Theorem \ref{semiFrobext}, there exists a right $S$-isomorphism $\beta:S\to S^{*_\kappa}$ if and only if there exists a Frobenius functional $\varepsilon:A\rightarrow \field$. Now, analogously to the proof of Theorem \ref{Frobext},
\[
\kappa(x)\beta(1)(a)=m\varepsilon(a) x+n\varepsilon(a).
\]
and
\[
\beta(x)(a)=\beta(1)(xa)=\beta(1) (\sigma(a)x+\delta(a))=\varepsilon(\sigma(a))x+\varepsilon(\delta(a))
\]
for any $a\in A$. Hence,  $\beta$ is left $R$-linear if and only if $\varepsilon \sigma = m\varepsilon$ and $\varepsilon \delta = n\varepsilon$.
\end{proof}

\begin{corollary}\label{Frob2KExt}
$R \subseteq S$ is a Frobenius extension of second kind if and only if there exists a Frobenius functional $\varepsilon: A \to \field$ and $m,n\in\field$ with $m \neq 0$ such that $\varepsilon \sigma = m\varepsilon$ and $\varepsilon \delta = n\varepsilon$.
\end{corollary}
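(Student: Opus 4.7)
The plan is to derive the corollary directly from Proposition \ref{prop second kind}, after pinning down which automorphisms $\kappa$ of $R$ can possibly appear in a second-kind Frobenius structure. By definition, $R\subseteq S$ is Frobenius of second kind if $S$ is finitely generated projective as a right $R$-module---which Lemma \ref{lemma 1a} already grants, since $S$ is in fact free over $R$---and there exists an automorphism $\kappa\colon R\to R$ together with an $R$-$S$-bimodule isomorphism $\alpha\colon S\to S^{*_\kappa}$. Thus the content of the corollary is the equivalence between the existence of such a pair $(\kappa,\alpha)$ and the existence of a triple $(\varepsilon,m,n)$ as in the statement.

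Next I would observe that the $\field$-algebra automorphism group of $R=\field[x]$ consists precisely of the maps $\kappa$ determined by $\kappa(x)=mx+n$ with $m,n\in\field$ and $m\neq 0$, because $\kappa(x)$ must be a polynomial generator of $\field[x]$ over $\field$. Hence every $\kappa$ that is relevant here is exactly of the form handled by Proposition \ref{prop second kind}, and no further generality is lost by restricting to this family.

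The two implications then follow formally. For the forward direction, writing a given $\kappa$ as $\kappa(x)=mx+n$, Proposition \ref{prop second kind} turns the $R$-$S$-isomorphism $\alpha\colon S\to S^{*_\kappa}$ into a Frobenius functional $\varepsilon\colon A\to\field$ satisfying $\varepsilon\sigma=m\varepsilon$ and $\varepsilon\delta=n\varepsilon$. For the converse, from a triple $(\varepsilon,m,n)$ as in the statement I would define $\kappa(x)=mx+n$, which is an automorphism of $R$ precisely because $m\neq 0$; Proposition \ref{prop second kind} then supplies the desired $R$-$S$-bimodule isomorphism $S\cong S^{*_\kappa}$, and Lemma \ref{lemma 1a} supplies projectivity, so $R\subseteq S$ is indeed Frobenius of second kind. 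No genuine obstacle arises because the substantive work is already packaged inside Proposition \ref{prop second kind}; the only mild point of care is the convention that the automorphism $\kappa$ defining the second-kind structure is taken to be $\field$-linear, which is the natural choice in the present $\field$-algebra setting and is what restricts $\kappa$ to the form $mx+n$.
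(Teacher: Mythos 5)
Your proposal is correct and follows exactly the route the paper intends: the corollary is stated without proof precisely because it is the combination of Proposition \ref{prop second kind}, the observation that the ($\field$-algebra) automorphisms of $\field[x]$ are exactly the maps $x\mapsto mx+n$ with $m\neq 0$, and the freeness of $S$ over $R$ from Lemma \ref{lemma 1a}. Your closing remark on the $\field$-linearity convention for $\kappa$ is well placed, since it matches the paper's implicit restriction in Proposition \ref{prop second kind} (and is vacuous in the counterexample, where $\field=\field_2$ admits only the identity automorphism).
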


Are biseparable extensions Frobenius extensions of second kind? The answer is again negative. 

\begin{example}[Biseparable extensions are not necessarily Frobenius of second kind] By the latter result, Example \ref{counterexample} also provides an example of a biseparable extension which is not Frobenius of second kind. Indeed, let $\kappa:\field_2[x]\to \field_2[x]$ be an automorphism. Hence $\kappa(x)=x+n$ for some $n\in \field_2$. The case $n=0$ is already analyzed in Example \ref{counterexample}. Therefore, set $\kappa(x)=x+1$. By Proposition \ref{prop second kind},  $\field_2[x] \subseteq A[x;\sigma, \delta]$ is Frobenius of second kind if and only if there exists a Frobenius functional $\varepsilon:A\to \field_2$ verifying $\varepsilon\sigma=\varepsilon$ and $\varepsilon\delta=\varepsilon$. As reasoned in Example \ref{counterexample}, $\varepsilon$ is determined by four values $\gamma_0,\gamma_1,\gamma_2, \gamma_3\in \field_2$ such that $\varepsilon(a^{2^i}e_j)=\gamma_j$ for any $i=0,1,2$ and $j=0,1,2,3$. Now,
\begin{itemize}
\item If $\gamma_0=1$, then
$0=\varepsilon\delta\begin{pmatrix} 
a& 0 \\
0 & 0 
\end{pmatrix} \not = \varepsilon \begin{pmatrix} 
a& 0 \\
0 & 0 
\end{pmatrix}=1$,
\item If $\gamma_1=1$, then
$0=\varepsilon\delta\begin{pmatrix} 
0& a \\
0 & 0 
\end{pmatrix} \not = \varepsilon \begin{pmatrix} 
0& a \\
0 & 0 
\end{pmatrix}=1$,
\item If $\gamma_2=1$, then
$0=\varepsilon\delta\begin{pmatrix} 
0& 0 \\
a^2 & 0 
\end{pmatrix} \not = \varepsilon \begin{pmatrix} 
0& 0 \\
a^2 & 0 
\end{pmatrix}=1$,
\item If $\gamma_3=1$, then
$0=\varepsilon\delta\begin{pmatrix} 
0& 0 \\
0 & a^2 
\end{pmatrix} \not = \varepsilon \begin{pmatrix} 
0& 0 \\
0 & a^2 
\end{pmatrix}=1$,
\end{itemize}
so that $\varepsilon\delta=\varepsilon$ if and only if $\varepsilon=0$. By Corollary \ref{Frob2KExt}, $\field_2[x] \subseteq A[x;\sigma, \delta]$ is not Frobenius of second kind. Additionally, we may state that the class of Frobenius extensions of second kind is strictly contained in the class of left and right semi Frobenius.
\end{example}

We can formulate the next problem. 

\begin{problem}\label{extendedproblem}
Are biseparable extensions left and right semi Frobenius?
\end{problem}

The techniques developed in this paper are not suitable to handle with this problem. In fact, assume \(R \subseteq S\) is biseparable with \(\delta = \delta_{\sigma,b}\) inner. Then \(\field \subseteq A\) is separable by Proposition \ref{sepdown}. By \cite[Proposition 5]{Eilenberg/Nakayama:1955} or \cite[Theorem 4.2]{Endo/Watanabe:1967}, \(\field \subseteq A\) is a Frobenius extension, hence \(R \subseteq S\) is right and left semi Frobenius by Theorem \ref{semiFrobtwosided}.

\end{document}